\newtheorem{theorem}{Theorem}
\newtheorem{lemma}[theorem]{Lemma}
\newtheorem{proposition}[theorem]{Proposition}
\newtheorem{conjecture}[theorem]{Conjecture}
\newtheorem{definition}{Definition}
\newtheorem{corollary}[theorem]{Corollary}
\newtheorem{remark}[theorem]{Remark}
\newcommand{\x}{\mathbf{x}}
\newcommand{\p}{\mathbf{p}}
\newcommand{\q}{\mathbf{q}}
\newcommand{\y}{\mathbf{y}}
\newcommand{\z}{\mathbf{z}}
\newcommand{\oo}{\mathbf{o}}
\newcommand{\aaa}{\mathbf{a}}
\newcommand{\bbb}{\mathbf{b}}
\newcommand{\ccc}{\mathbf{c}}
\newcommand{\B}{\mathbf{B}}
\newcommand{\C}{\mathbf{C}}
\newcommand{\A}{\mathbf{A}}
\newcommand{\K}{\mathbf{K}}
\newcommand{\R}{\mathbf{R}_{+}}
\newcommand{\Ee}{\mathbb{E}}
\newcommand{\Ss}{\mathbb{S}}
\newcommand{\Mm}{\mathbb{M}}
\newcommand{\dist}{{\rm dist}}
\newcommand{\Ed}{\Ee^d}
\newcommand{\noshow}[1]{}
\newcommand{\Sedm}{{\mathbb S}^{d-1}}
\title{Volumetric bounds for intersections of congruent balls
\footnote{Keywords and phrases: Euclidean $d$-space, $r$-ball body, $r$-ball polyhedron, intrinsic volume, inradius, circumradius, $r$-lense, $r$-spindle. \newline \hspace*{.35cm} 2010 Mathematics Subject Classification: 52A20, 52A22.}}
\author{K\'{a}roly Bezdek\thanks{Partially supported by a Natural Sciences and 
Engineering Research Council of Canada Discovery Grant.}}
\date{}
\begin{document}

\maketitle

\begin{abstract}
\noindent We investigate the intersections of balls of radius $r$, called $r$-ball bodies, in Euclidean $d$-space. An $r$-lense (resp., $r$-spindle) is the intersection of two balls of radius $r$ (resp., balls of radius $r$ containing a given pair of points). We prove that among $r$-ball bodies of given volume, the $r$-lense (resp., $r$-spindle) has the smallest inradius (resp., largest circumradius). In general, we upper (resp., lower) bound the intrinsic volumes of $r$-ball bodies of given inradius (resp., circumradius). This complements and extends some earlier results on volumetric estimates for $r$-ball bodies. 
\end{abstract}

\section{Introduction}\label{sec:intro}

Let $\Ee^d$ denote the $d$-dimensional Euclidean vector space, with inner product $\langle\cdot ,\cdot\rangle$ and norm $\|\cdot\|$. Its unit sphere centered at the origin $\oo$ is $\Sedm:= \{\x\in\Ee^d\ |\ \|\x\|= 1\}$. The closed Euclidean ball of radius $r$ centered at $\p\in\Ed$ is denoted by $\B^d[\p,r]:=\{\q\in\Ed\ |\  |\p-\q|\leq r\}$.  Lebesgue measure on $\Ee^d$ is denoted by $V_d(\cdot)$ and spherical Lebesgue measure on $\Sedm$ by $SV_{d-1}(\cdot )$. If $A\subset\Ed$ is a compact convex set, and $0\leq k<d$, then we denote the {\it $k$th intrinsic volume} of $A$ by $V_k(A)$, which can be defined via the Steiner formula:

\begin{equation}\label{Steiner}
V_d(A+\epsilon  \B^d[\oo, 1])=\sum_{i=1}^d\omega_{d-i}V_i(A){\epsilon}^{d-i}.
\end{equation} 
Here $V_d(A)$ (resp.,$V_d(A+\epsilon  \B^d[\oo, 1])$) is called the {\it volume} of $A$ (resp., $A+\epsilon  \B^d[\oo, 1])$), $2V_{d-1}(A)$ is the {\it surface area} of $A$, $\frac{2\omega_{d-1}}{d\omega_d}V_1(A)$ is equal to the {\it mean width} of $A$, and $V_0(A)=1$, where $\omega_d$ stands for the volume of a $d$-dimensional unit ball, i.e., $\omega_d:=\frac{\pi^{\frac{d}{2}}}{\Gamma(1+\frac{d}{2})}$.

\begin{definition}\label{r-ball body}
For a set $\emptyset\neq X\subseteq\Ee^d$, and $r>0$ let the {\rm $r$-ball body} $X^r$ generated by $X$ be defined by $X^r:=\bigcap_{\x\in X}\B^d[\x, r]$. If $X\subset \Ee^d$ is a finite set, then we call $X^r$ the $r$-ball polyhedron generated by $X$ in $\Ee^d$.
\end{definition}

We note that $r$-ball bodies and $r$-ball polyhedra have been intensively studied (under various names) from the point of view of convex and discrete geometry in a number of publications (see the recent papers \cite{BLNP}, \cite{JMR}, \cite{KMP}, \cite{LNT}, \cite{MRS}, and the references mentioned there). In particular, the following {\it Blaschke--Santal\'o-type inequalities} have been proved by Paouris and Pivovarov (Theorem 3.1 in \cite{PaPi}) as well as the author (Theorem 1 in \cite{Be19}) for $r$-ball bodies in $\Ee^d$. Let $\A\subset\Ee^d$, $d>1$ be a compact set of volume $V_{d}(\A)>0$ and $r>0$. If $\B\subset\Ee^d$ is a ball with $V_{d}(\A)=V_{d}(\B)$, then 
\begin{equation}\label{Bezdek-inequalities}
V_{k}(\A^r)\leq V_{k}(\B^r)
\end{equation}
holds for all $1\le k\le d$. In order to state an extension of (\ref{Bezdek-inequalities}) to non-Euclidean spaces we recall the following. Let $\Mm^d$, $d>1$ denote the $d$-dimensional Euclidean, hyperbolic, or spherical space, i.e., one of the simply connected complete Riemannian manifolds of constant sectional curvature. Since simply connected complete space forms, the sectional curvature of which have the same sign are similar, we may assume without loss of generality that the sectional curvature $\kappa$ of $\Mm^d$ is $0, -1$, or $1$. Let $\R$ denote the set of positive real numbers for $\kappa\leq 0$ and the half-open interval $(0, \frac{\pi}{2}]$ for $\kappa =1$. Let $\dist_{\Mm^d}(\x,\y)$ stand for the {\it geodesic distance} between the points $\x\in{\Mm^d}$ and $\y\in{\Mm^d}$. Furthermore, let $\B_{\Mm^d}[\x, r]$ denote the closed $d$-dimensional ball with center $\x\in\Mm^d$ and radius $r\in\R$ in $\Mm^d$, i.e., let $\B_{\Mm^d}[\x, r]:=\{\y\in\Mm^d\ |\dist_{\Mm^d}(\x,\y)\leq r\}$. Finally, for a set $X\subseteq\Mm^d$, $d>1$ and $r\in\R$ let the {\it $r$-ball body} $X^r$ generated by $X$ be defined by $X^r:=\bigcap_{\x\in X}\B_{\Mm^d}[\x, r]$. The following extension of (\ref{Bezdek-inequalities}) to $\Mm^d$ has been proved by the author in \cite{Be18}. Let $\A\subseteq\Mm^d$, $d>1$ be a compact set of volume $V_{\Mm^d}(\A)>0$ and $r\in\R$. If $\B\subseteq\Mm^d$ is a ball with $V_{\Mm^d}(\A)=V_{\Mm^d}(\B)$, then 
\begin{equation}\label{Bezdek-volume-inequality}
V_{\Mm^d}(\A^r)\leq V_{\Mm^d}(\B^r).
\end{equation}
We note that somewhat earlier Gao, Hug, and Schneider \cite{GHSch} proved a special case of (\ref{Bezdek-volume-inequality}) namely, when $\Mm^d=\Ss^d$ and $r=\frac{\pi}{2}$. On the other hand, (\ref{Bezdek-inequalities}) and (\ref{Bezdek-volume-inequality}) have been used in  \cite{Be19} and \cite{Be18} to prove the longstanding Kneser--Poulsen conjecture for uniform contractions of sufficiently many congruent balls in $\Mm^d$ (see also Theorem 1.4 and its proof in \cite{BeNa}). Next, we discuss the following related result of the author and Schneider \cite{BeSch09}, which is again on upper bounding the volume of $r$-ball bodies for $r=\frac{\pi}{2}$ in $\Ss^d$. In order to state it, recall that a {\it spherically convex body} is a closed, spherically convex subset $\K$ of $\Ss^d$ with interior points and lying in some closed hemisphere, thus, the intersection of $\Ss^d$ with a $(d+1)$-dimensional closed convex cone of $\Ee^{d+1}$ different from $\Ee^{d+1}$. The {\it inradius} $r_{in}(\K)$ of $\K$ is the angular radius of the largest spherical ball contained in $\K$. Also, recall that a {\it lune} in $\mathbb{S}^d$ is the $d$-dimensional intersection of $\mathbb{S}^d$ with two closed halfspaces of $\mathbb{E}^{d+1}$ with the origin $\mathbf{o}$ in their boundaries. Evidently, the inradius of a lune is half the interior angle between the two defining hyperplanes. Now, the main result of \cite{BeSch09} on {\it volume maximizing lunes} can be stated as follows. For a somewhat simpler and more direct proof by Akopyan and Karasev see  Section 6 in \cite{AkKa12} as well as Section 8.4 in \cite{Be}. If $\mathbf{K}$ is a spherically convex body in $\mathbb{S}^d, d\ge 2$, then
\begin{equation}\label{Bezdek-Schneider-spherical-inequality}
 {\rm Svol}_d(\mathbf{K})\le \frac{(d+1)\omega_{d+1}}{\pi} r_{in}(\mathbf{K}).
 \end{equation}
Equality holds if and only if $\mathbf{K}$ is a lune. For the sake of completeness we note that (\ref{Bezdek-Schneider-spherical-inequality}) is used in \cite{BeSch09} to derive the following spherical version of a Tarski-type theorem of Kadets (\cite{Ka05}). If the spherically convex bodies $\mathbf{K}_1,\dots,\mathbf{K}_n$ cover the spherical ball $\mathbf{B}$ of radius $r_{in}(\mathbf{B}) \ge\frac{\pi}{2}$ in $\mathbb{S}^d, d\ge 2$, then $ \sum_{i=1}^n r_{in}(\mathbf{K}_i)\ge r_{in}(\mathbf{B}).$

The main goal of this note is to extend (\ref{Bezdek-Schneider-spherical-inequality}) to Euclidean spaces as follows. Let $\K\subset\Ee^d$ be a convex body, i.e., let $\K$ be a compact convex set with nonempty interior in $\Ee^d$. Then its {\it inradius} $r_{in}(\K)$ (resp., {\it circumradius} $r_{cr}(\K)$) is the radius of the largest (resp., smallest) ball contained in (resp., containing) $\K$. Furthermore, if $\K$ is an intersection of two balls of radius $r$, then we call it an {\it $r$-lense} of $\Ee^d$. In particular, we are going to use the notation $L_{r, \rho, d}$ for an $r$-lense whose inradius is $\rho$ in $\Ee^d$, where $r\geq\rho>0$.

\begin{theorem}\label{max-volume-lense}
Let $r>r_0>0$, $N>1, d>1$, and let $P:=\{\p_1,\p_2,\dots ,\p_N\}\subset\Ee^d$ with $r_{cr}(P)=r_0$. Then
\begin{equation}\label{Bezdek-1}
V_d(P^r)\leq V_d(L_{r, r-r_0, d}).
\end{equation}
\end{theorem}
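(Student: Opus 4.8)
The plan is to place the circumcentre of $P$ at the origin $\oo$ and to reduce (\ref{Bezdek-1}) to a one–parameter extremal inequality on the unit sphere $\Sedm$. First I would record the (sharp) inradius of $P^r$. If $\oo$ is the circumcentre of $P$, then $\|\p_i\|\le r_0$ for all $i$, so $\B^d[\oo,r-r_0]\subseteq\B^d[\p_i,r]$ for every $i$ and hence $\B^d[\oo,r-r_0]\subseteq P^r$; conversely, if $\B^d[\z,\varrho]\subseteq P^r$, then $\|\z-\p_i\|\le r-\varrho$ for all $i$, so $P\subseteq\B^d[\z,r-\varrho]$ and therefore $r_0=r_{cr}(P)\le r-\varrho$. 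Thus $r_{in}(P^r)=r-r_0$, which is exactly the inradius of the target lense $L_{r,r-r_0,d}$; this is what makes (\ref{Bezdek-1}) a comparison at equal inradius. Next I would reduce to generators lying on the circumsphere: letting $Q:=P\cap\bd\B^d[\oo,r_0]$ be the contact set of the smallest enclosing ball, the elementary theory of the smallest enclosing ball gives $\oo\in\conv Q$ and $r_{cr}(Q)=r_0$, while $Q\subseteq P$ yields $P^r\subseteq Q^r$ and hence $V_d(P^r)\le V_d(Q^r)$. So it suffices to prove (\ref{Bezdek-1}) assuming $\|\p_i\|=r_0$ for every $i$ and $\oo\in\conv\{\p_1,\dots,\p_N\}$.

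I would then pass to the radial function of $P^r$ with respect to $\oo$. For $u\in\Sedm$ the ray $\{tu:t\ge0\}$ leaves $\B^d[\p_i,r]$ at $t=g(\langle u,\p_i\rangle)$, where $g(s):=s+\sqrt{s^2+r^2-r_0^2}$ is increasing and convex on $[-r_0,r_0]$ (here the constant $r^2-r_0^2$ appears precisely because $\|\p_i\|=r_0$). Hence the radial function is $\rho_{P^r}(u)=g\bigl(\min_i\langle u,\p_i\rangle\bigr)=g\bigl(-h_K(-u)\bigr)$, where $K:=\conv\{\p_1,\dots,\p_N\}$ and $h_K(u):=\max_{\x\in K}\langle u,\x\rangle$ is its support function. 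Writing the volume in polar coordinates and using the symmetry $u\mapsto -u$ gives
\begin{equation*}
V_d(P^r)=\frac1d\int_{\Sedm} g\bigl(-h_K(u)\bigr)^d\,\di u, \qquad I(K):=\int_{\Sedm} g\bigl(-h_K(u)\bigr)^d\,\di u.
\end{equation*}
For the lense, $K$ is the segment $[-r_0\mathbf e,r_0\mathbf e]$ with a unit vector $\mathbf e$, so $h_K(u)=r_0|\langle u,\mathbf e\rangle|$ and $V_d(L_{r,r-r_0,d})=\tfrac1d\,I([-r_0\mathbf e,r_0\mathbf e])$. Thus (\ref{Bezdek-1}) is equivalent to the sphere inequality $I(K)\le I([-r_0\mathbf e,r_0\mathbf e])$.

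The main obstacle is this last inequality: among convex bodies $K$ with circumcentre $\oo$ and circumradius $r_0$, the diameter segment maximises $I$. Two reductions are immediate. Since $g$ is increasing, enlarging $K$ decreases $I$, so one may assume $K=\conv Q$ is the polytope spanned by the contact points; and since $K\subseteq\B^d[\oo,r_0]$ forces $h_K\le r_0$, one always has $I(K)\ge I(\B^d[\oo,r_0])$, which merely recovers the inscribed–ball bound $V_d(P^r)\ge V_d(\B^d[\oo,r-r_0])$. The hard direction is the reverse, upper, estimate, where the active constraint is the global balancing condition $\oo\in\conv Q$: this is precisely what forbids $h_K$ from being small in all directions, and the segment is the configuration making $h_K$ as concentrated as the constraint permits. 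I expect to establish $I(K)\le I([-r_0\mathbf e,r_0\mathbf e])$ by a symmetrisation/compression on the vertex set $Q$ that keeps the vertices on $\bd\B^d[\oo,r_0]$, preserves $\oo\in\conv Q$, and increases $I$ monotonically, collapsing $K$ toward an antipodal pair.

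The step most likely to require care is designing this move so that it respects the circumcentre constraint while the convexity and monotonicity of $g$ yield monotonicity of $I$. I would note that this is the Euclidean counterpart of the spherical lune estimate (\ref{Bezdek-Schneider-spherical-inequality}), whose extremal body is likewise the "two–sided" object (a lune); I would therefore look to adapt the averaging idea behind that result, in particular comparing $I(K)$ with $I([-r_0\mathbf e,r_0\mathbf e])$ direction by direction after an appropriate rearrangement of $h_K$ driven by the representation $\oo=\sum_j\lambda_j\q_j$ with $\q_j\in Q$.
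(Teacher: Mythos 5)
Your setup is sound and coincides with the paper's opening moves: placing the circumcentre at $\oo$, computing $r_{in}(P^r)=r-r_0$, reducing to generators $Q$ on $r_0\Ss^{d-1}$ with $\oo\in\conv Q$, and the radial-function identity $\rho_{P^r}(u)=g\bigl(-h_K(-u)\bigr)$ with $g(s)=s+\sqrt{s^2+r^2-r_0^2}$ is correct. But the argument has a genuine gap at exactly the point where the theorem lives: the extremal inequality $I(K)\le I([-r_0\mathbf{e},r_0\mathbf{e}])$ is never proved. You only announce that you \emph{expect} to establish it by a symmetrisation of the vertex set that keeps the points on $r_0\Ss^{d-1}$, preserves $\oo\in\conv Q$, and increases $I$; no such move is constructed, and constructing one is not routine, since naive rearrangements of points on a sphere do not preserve the balancing condition $\oo\in\conv Q$ --- and without that condition the target inequality is simply false (cluster all generators near one point of $r_0\Ss^{d-1}$ and $I$ blows past the lense value). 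So the proposal reduces the theorem to an unproved claim of essentially the same difficulty as the theorem itself.

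It is worth seeing how the paper closes this gap, because its mechanism is precisely the layer-cake form of your integral $I(K)$. One writes $V_d(Q^r)=\int_0^{\sqrt{r^2-r_0^2}}\sigma(x\Ss^{d-1}\cap Q^r)\,dx$ and compares level by level with the lense. For each $x$ with $r-r_0<x\le\sqrt{r^2-r_0^2}$, the complement of $x\Ss^{d-1}\cap Q^r$ in $x\Ss^{d-1}$ is a union of open spherical caps of one \emph{common} angular radius $\epsilon=\epsilon(x)\le\frac{\pi}{2}$ centred at the points $-\frac{x}{r_0}\p_i$ (common because all $\p_i$ lie on $r_0\Ss^{d-1}$, and $\epsilon\le\frac{\pi}{2}$ because $x\le\sqrt{r^2-r_0^2}$), and these centres do not lie in an open hemisphere precisely because $\oo\in{\rm relint}(\conv Q)$. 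The per-level inequality is then Lemma~\ref{spherical-neighbourhood-sets} (Akopyan--Karasev): the $\epsilon$-neighbourhood of a subset of a sphere not contained in an open hemisphere has spherical measure at least that of the $\epsilon$-neighbourhood of an antipodal pair; the paper proves this by a Voronoi-cell density estimate. That single lemma is the ingredient your proposal is missing. If you want to salvage your approach, replace the global symmetrisation by this comparison of superlevel sets of $g(-h_K(\cdot))$: the constraint $\oo\in\conv Q$ enters only through the ``not in an open hemisphere'' hypothesis, which is exactly the usable form of the balancing condition you were looking for.
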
 

\begin{remark}\label{equivalent-lense}
We note that $r_{in}(P^r)=r-r_0$ in Theorem~\ref{max-volume-lense}. Thus, it follows that Theorem~\ref{max-volume-lense} is equivalent to the statement that among $r$-ball polyhedra (resp., $r$-ball bodies) of given volume in $\Ee^d$, the $r$-lense has the smallest inradius.
\end{remark}

One can derive from Theorem~\ref{max-volume-lense} the following weaker version of Kadets's theorem. (It is worth emphasizing that our proof of Corollary~\ref{Kadets-type} is volumetric while the proof of Kadets's theorem published in \cite{Ka05} is not.)

\begin{corollary}\label{Kadets-type}
Let $\B$ be a ball of radius $r>0$ in $\Ee^d, d>1$ and let $\C_i$ be an $r_i$-ball body with $r_i\leq r$ for $1\leq i\leq n$ in $\Ee^d$ such that $\B\subseteq\bigcup_{i=1}^{n}\C_i$.
Then 
\begin{equation}\label{Kadets-type-inequality} 
r\leq\sum_{i=1}^{n}r_{in}(\C_i\cap\B).
\end{equation}
\end{corollary}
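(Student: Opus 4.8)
The plan is to run a volume comparison and then convert it into the stated inradius bound. Since $\B\subseteq\bigcup_{i=1}^{n}\C_i$, we have $\B=\bigcup_{i=1}^{n}(\C_i\cap\B)$, so subadditivity of Lebesgue measure gives $V_d(\B)\le\sum_{i=1}^{n}V_d(\C_i\cap\B)$. The idea is to bound each term $V_d(\C_i\cap\B)$ from above by the volume of an $r$-lense via Theorem~\ref{max-volume-lense}, and then to exploit how the volume of an $r$-lense grows with its inradius to recover a linear inequality in the inradii.

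First I would check that each $\C_i\cap\B$ is itself an $r$-ball body, so that Theorem~\ref{max-volume-lense} (in the $r$-ball body form recorded in Remark~\ref{equivalent-lense}) applies. The key observation is that any ball $\B^d[\x,r_i]$ with $r_i\le r$ is an $r$-ball body, since $\B^d[\x,r_i]=\bigcap_{\|\y-\x\|=r-r_i}\B^d[\y,r]$: a point $\z$ lies in the right-hand intersection iff $\max_{\|\y-\x\|=r-r_i}\|\z-\y\|=\|\z-\x\|+(r-r_i)\le r$, i.e. iff $\z\in\B^d[\x,r_i]$. Hence every generating ball of $\C_i$, as well as $\B$, is an intersection of radius-$r$ balls, so $\C_i\cap\B$ is an $r$-ball body. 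Writing $\rho_i:=r_{in}(\C_i\cap\B)$, Remark~\ref{equivalent-lense} then yields $V_d(\C_i\cap\B)\le V_d(L_{r,\rho_i,d})$, where degenerate terms with empty interior contribute $0$ to both sides. Note also $\rho_i\le r_{in}(\B)=r$, and $V_d(\B)=V_d(L_{r,r,d})$ because $\B$ is the degenerate $r$-lense of inradius $r$.

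The remaining, analytic step is to understand $f(\rho):=V_d(L_{r,\rho,d})$ for $\rho\in(0,r]$. Slicing $L_{r,\rho,d}$ into two spherical caps of height $\rho$ cut from a ball of radius $r$ gives $f(\rho)=2\omega_{d-1}\int_{r-\rho}^{r}(r^2-u^2)^{\frac{d-1}{2}}\,du$, whence $f(0)=0$ and $f'(\rho)=2\omega_{d-1}\bigl(\rho(2r-\rho)\bigr)^{\frac{d-1}{2}}>0$. A direct computation then shows $f''(\rho)=2(d-1)\omega_{d-1}\bigl(\rho(2r-\rho)\bigr)^{\frac{d-3}{2}}(r-\rho)\ge 0$ on $(0,r)$, so $f$ is convex with $f(0)=0$. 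Consequently $f(\rho)/\rho$ is non-decreasing, giving $f(\rho_i)\le\frac{\rho_i}{r}f(r)$ for every $i$ (using $\rho_i\le r$).

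Combining everything, $f(r)=V_d(\B)\le\sum_{i=1}^{n}V_d(\C_i\cap\B)\le\sum_{i=1}^{n}f(\rho_i)\le\frac{f(r)}{r}\sum_{i=1}^{n}\rho_i$, and dividing by $f(r)=\omega_d r^d>0$ gives $r\le\sum_{i=1}^{n}\rho_i=\sum_{i=1}^{n}r_{in}(\C_i\cap\B)$, as desired. I expect the main obstacle to be conceptual rather than computational: the crucial realization is that $\C_i\cap\B$ is an $r$-ball body, which is exactly what makes Theorem~\ref{max-volume-lense} applicable; the only genuinely quantitative input is the convexity (equivalently, star-shapedness) of $\rho\mapsto V_d(L_{r,\rho,d})$, which is what upgrades the volume bound to the linear inradius bound.
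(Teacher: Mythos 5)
Your proposal is correct and takes essentially the same route as the paper: both observe that each $\C_i\cap\B$ is an $r$-ball body (since a ball of radius $r_i\le r$ is an intersection of balls of radius $r$), apply subadditivity of volume together with Theorem~\ref{max-volume-lense} to obtain $V_d(\B)\le\sum_{i=1}^{n}V_d\left(L_{r,\,r_{in}(\C_i\cap\B),\,d}\right)$, and then convert this volume inequality into the inradius bound. The only difference is that the paper compresses the last step into the assertion that the displayed inequality forces $r\le\sum_{i=1}^{n}r_{in}(\C_i\cap\B)$, whereas you justify it explicitly via the convexity of $\rho\mapsto V_d(L_{r,\rho,d})$ together with its vanishing at $\rho=0$, which gives $V_d(L_{r,\rho,d})\le\frac{\rho}{r}V_d(\B)$ for $0\le\rho\le r$ --- a correct and worthwhile elaboration of the step the paper leaves implicit.
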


\begin{theorem}\label{upper-bound-for-intrinsic-volume}
Let $r>r_0>0$, $N>1, d>k>0$, and let $P:=\{\p_1,\p_2,\dots ,\p_N\}\subset\Ee^d$ with $r_{cr}(P)=r_0$. Then
\begin{equation}\label{Bezdek-2}
V_k(P^r)\leq V_k\left(L_{r, r-\sqrt{\frac{d+1}{2d}}r_0, d}\right).
\end{equation}
\end{theorem}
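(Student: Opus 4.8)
The plan is to bound $P^r$ from above, in the sense of set inclusion, by a single $r$-lense whose inradius is exactly $r-\sqrt{\frac{d+1}{2d}}\,r_0$, and then to invoke the monotonicity of intrinsic volumes under inclusion of convex bodies. The point is that an $r$-ball body, being an intersection of balls, is convex, as is every $r$-lense; hence $P^r\subseteq\K$ forces $V_k(P^r)\le V_k(\K)$ for every $0<k<d$, this being a standard property of intrinsic volumes (it follows, e.g., from Kubota's projection formula together with the monotonicity of the $k$-volume of orthogonal projections).

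First I would extract a diametral pair. Writing $D:={\rm diam}(P)$ and choosing $\p_i,\p_j\in P$ with $|\p_i-\p_j|=D$, we have $P^r\subseteq\B^d[\p_i,r]\cap\B^d[\p_j,r]$, and the right-hand side is an $r$-lense. A direct computation (the largest inscribed ball is centered at the midpoint of $\p_i$ and $\p_j$) shows that this lense has inradius $r-\tfrac{D}{2}$, i.e. it is congruent to $L_{r,\,r-D/2,\,d}$, which suffices since intrinsic volumes are rigid-motion invariant.

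Next I would bring in Jung's theorem, which is the source of the constant $\sqrt{\frac{d+1}{2d}}$. Jung's inequality states $r_{cr}(P)\le\sqrt{\frac{d}{2(d+1)}}\,{\rm diam}(P)$, so from $r_{cr}(P)=r_0$ we get $D\ge\sqrt{\frac{2(d+1)}{d}}\,r_0$ and therefore $\tfrac{D}{2}\ge\sqrt{\frac{d+1}{2d}}\,r_0$, using the arithmetic identity $\tfrac12\sqrt{\frac{2(d+1)}{d}}=\sqrt{\frac{d+1}{2d}}$. Consequently the inradius of the enclosing lense satisfies $r-\tfrac{D}{2}\le r-\sqrt{\frac{d+1}{2d}}\,r_0$; note this target inradius is positive because $d>1$ forces $\sqrt{\frac{d+1}{2d}}<1$, and $r>r_0$.

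It remains to compare two coaxial $r$-lenses of different inradii. Placing both symmetrically about the origin along a common axis, one checks that $L_{r,\rho_1,d}\subseteq L_{r,\rho_2,d}$ whenever $0<\rho_1\le\rho_2\le r$: for a point on the side $x_1\ge0$ the binding constraint is the distance to the farther center, and shrinking the half-distance between the centers from $r-\rho_1$ to $r-\rho_2$ only decreases that distance. Hence $\rho\mapsto V_k(L_{r,\rho,d})$ is nondecreasing, and chaining the inclusions $P^r\subseteq L_{r,\,r-D/2,\,d}\subseteq L_{r,\,r-\sqrt{(d+1)/(2d)}\,r_0,\,d}$ with intrinsic-volume monotonicity yields (\ref{Bezdek-2}). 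The only genuine content beyond bookkeeping is the appearance of Jung's constant; the main things to get right are the direction of the lense nesting (smaller inradius inside larger) and the verification that $\tfrac12\sqrt{2(d+1)/d}=\sqrt{(d+1)/(2d)}$ matches Jung's bound exactly.
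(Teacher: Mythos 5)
Your argument is correct, and it takes a genuinely different and substantially shorter route than the paper. The paper first passes to a critical simplex $Q\subseteq P$ on the circumsphere, applies central (Minkowski) symmetrization and the Brunn--Minkowski inequality for intrinsic volumes to get $V_k(P^r)\le V_k(M_{\oo}(Q^r))$, proves the identity $M_{\oo}(Q^r)=(M_{\oo}(Q))^r$, traps this $\oo$-symmetric $r$-ball polyhedron in an $r$-lense of inradius $r-\tfrac12{\rm diam}(Q)$, and only then invokes Jung's theorem. You bypass the symmetrization machinery entirely: discarding all balls except the two centered at a diametral pair of $P$ already gives $P^r\subseteq L_{r,\,r-{\rm diam}(P)/2,\,d}$, and Jung's theorem plus monotonicity of $V_k$ finish the proof. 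Since ${\rm diam}(P)\ge{\rm diam}(Q)$, your intermediate bound is in fact at least as sharp as the one the paper extracts from the symmetral. Your two auxiliary verifications are both sound: the inradius of $\B^d[\p_i,r]\cap\B^d[\p_j,r]$ is exactly $r-\tfrac{D}{2}$ (any inscribed ball of radius $\rho$ centered at $\ccc$ forces $2\rho\le 2r-\|\ccc-\p_i\|-\|\ccc-\p_j\|\le 2r-D$), and the nesting of coaxial lenses follows in one line from the observation that $\pm a_2\mathbf{u}$ is a convex combination of $a_1\mathbf{u}$ and $-a_1\mathbf{u}$ when $0\le a_2\le a_1$, so membership in both balls centered at $\pm a_1\mathbf{u}$ implies membership in both balls centered at $\pm a_2\mathbf{u}$. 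What the paper's heavier route buys is structural information about the symmetral of $P^r$ being itself an $r$-ball body of controlled circumradius (presumably with an eye toward Conjecture~\ref{max-intrinsic-volume-lense}); what yours buys is an elementary proof resting only on Jung's theorem and the monotonicity of intrinsic volumes.
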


In connection with Theorems~\ref{max-volume-lense} and~\ref{upper-bound-for-intrinsic-volume} it is natural to raise

\begin{conjecture}\label{max-intrinsic-volume-lense}
Let $r>r_0>0$, $N>1, d>k>0$, and let $P:=\{\p_1,\p_2,\dots ,\p_N\}\subset\Ee^d$ with $r_{cr}(P)=r_0$. Then
\begin{equation}\label{Bezdek-3}
V_k(P^r)\leq V_k\left(L_{r, r-r_0, d}\right).
\end{equation}
\end{conjecture}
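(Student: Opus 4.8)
The reason Theorem~\ref{upper-bound-for-intrinsic-volume} falls short of the sharp bound is that it controls $r_{in}(P^r)$ only through the crude estimate $r-r_{in}(P^r)\le\sqrt{(d+1)/(2d)}\,r_0$, which is the Jung-type inequality relating the circumradius $r_0=r_{cr}(P)$ to the radius of the ball one can inscribe in the complement. To prove the conjecture one should abandon the circumradius entirely after a single use and phrase everything in terms of $\rho:=r_{in}(P^r)$. Indeed, by Remark~\ref{equivalent-lense} we have $r_{in}(P^r)=r-r_0$ exactly when $P$ attains its circumradius along a diametral configuration, so the real content of Conjecture~\ref{max-intrinsic-volume-lense} is the clean assertion: \emph{among all $r$-ball bodies $\K=P^r$ with $r_{in}(\K)=\rho$ fixed, the intrinsic volume $V_k(\K)$ is maximized by the $r$-lense $L_{r,\rho,d}$.} The plan is to prove this inradius-normalized statement directly, for every $1\le k\le d$, and then invoke $\rho=r-r_0$.

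\textbf{Step 1: reduce to a monotonicity-under-ball-addition principle.}
Fix the inscribed ball: after a translation assume $\B^d[\oo,\rho]\subseteq\K=P^r$. The defining feature of an $r$-ball body is that $\K=(\K^r)^r$ where the inner operation is the $r$-dual; equivalently $\K$ is an intersection of radius-$r$ balls whose centers all lie at distance $\ge r-\rho$... more precisely each generating center $\p_i$ satisfies $\dist(\oo,\p_i)\le r-\rho$ since $\B^d[\oo,\rho]\subseteq\B^d[\p_i,r]$. Thus all centers lie in $\B^d[\oo,r-\rho]=\B^d[\oo,r_0]$. The $r$-lense $L_{r,\rho,d}$ is the extreme case $\{\aaa,-\aaa\}^r$ with $\|\aaa\|=r-\rho=r_0$, i.e. two antipodal centers pushed to the boundary. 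I would set up a two-point replacement: show that for any generating family, replacing all centers by a single antipodal pair on the sphere $\|\x\|=r_0$ can only increase each $V_k$, by a symmetrization/spreading argument that moves centers outward and apart while preserving the inradius constraint $\|\p_i\|\le r_0$.

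\textbf{Step 2: monotonicity of intrinsic volumes under the $r$-ball operation.}
The engine is the reverse-inclusion monotonicity $X\subseteq Y \Rightarrow Y^r\subseteq X^r$, combined with monotonicity of $V_k$ under inclusion. Concretely, if I can exhibit a set $Y$ with $\{\aaa,-\aaa\}\subseteq\conv(Y)\subseteq\B^d[\oo,r_0]$ and $Y^r=P^r$-comparable, then $P^r\subseteq\{\aaa,-\aaa\}^r=L_{r,\rho,d}$ would give $V_k(P^r)\le V_k(L_{r,\rho,d})$ immediately for every $k$ by intrinsic-volume monotonicity. The crux is therefore to prove the geometric inclusion $P^r\subseteq L_{r,\rho,d}$ itself, not merely a volume inequality; if $P^r$ actually sits inside a suitably oriented $r$-lense of the same inradius, all $V_k$ bounds follow at once. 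I would attempt to establish this inclusion by choosing $\aaa$ to be a point of $P^r$ realizing the width in the direction of the thinnest slab and showing the two balls $\B^d[\aaa,r],\B^d[-\aaa,r]$ of the lense each contain $P^r$.

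\textbf{The main obstacle.}
The inclusion $P^r\subseteq L_{r,\rho,d}$ is \emph{false} in general (an $r$-ball polyhedron can be fatter than any same-inradius lense in the lateral directions even though the lense is "longest"), so Step~2 cannot be pushed through as a pure inclusion for all $k$ simultaneously; this is exactly why the inequality is only conjectural. The hard part is controlling the \emph{lateral spread} of $P^r$: the lense $L_{r,\rho,d}$ is the inradius-$\rho$ $r$-ball body that is maximally elongated along one axis but maximally pinched transversally, and one must prove that this elongation wins for every intrinsic volume. I expect the genuine route is a rearrangement inequality for $V_k$ under the $r$-ball operation — an analogue of the Blaschke--Santal\'o-type inequality~(\ref{Bezdek-inequalities}) but normalized by inradius rather than volume — and the missing ingredient is a concavity/Brunn--Minkowski-type estimate showing $V_k(P^r)$ is Schur-convex in the generating centers and thus extremized when they collapse to an antipodal pair. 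Proving that Schur-convexity, or equivalently ruling out fat transversal configurations, is the decisive gap that keeps this a conjecture.
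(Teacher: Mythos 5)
You were asked to prove Conjecture~\ref{max-intrinsic-volume-lense}, which the paper itself leaves open: it proves only the weaker Theorem~\ref{upper-bound-for-intrinsic-volume} (with $r_0$ degraded to $\sqrt{\frac{d+1}{2d}}\,r_0$, via the central symmetral identity $M_{\oo}(Q^r)=\left(M_{\oo}(Q)\right)^r$ and Jung's theorem) together with the case $d=2$, $k=1$ (Remark~\ref{planar case}, combining Borisenko--Drach with Theorem~\ref{max-volume-lense}). Your proposal, to its credit, does not pretend otherwise: your Step~2 reduces everything to the inclusion $P^r\subseteq L_{r,r-r_0,d}$, and you yourself observe that this inclusion is false, so the argument ends with an acknowledged, unfilled gap. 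That negative observation is correct and easy to make concrete: since $\B^d[\oo,r-r_0]\subseteq P^r$, every width of $P^r$ is at least $2(r-r_0)$, while the lense has width exactly $2(r-r_0)$ in the direction of the segment joining its two centers; if $Q\subset r_0\Ss^{1}$ is the vertex set of a regular triangle in $\Ee^2$, the inball of the Reuleaux-triangle-like body $Q^r$ touches its boundary only at the three points $-(r-r_0)\p_i/r_0$, no two of which are antipodal, so every width of $Q^r$ strictly exceeds $2(r-r_0)$ and $Q^r$ fits in no congruent lense of the same inradius. Consequently nothing beyond what the paper already proves is established by the proposal, and the closing appeal to ``Schur-convexity of $V_k$ in the generating centers'' is not a known tool but a restatement of the unproved content of the conjecture.

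Two local slips are worth correcting even in a sketch. First, the geometry of the lense is the opposite of what you describe: $L_{r,r-r_0,d}=\{\aaa,-\aaa\}^r$ with $\|\aaa\|=r_0$ extends only $r-r_0$ along the axis through $\pm\aaa$ but $\sqrt{r^2-r_0^2}$ in every equatorial direction (cf.\ (\ref{Bezdek-16})); it is the $r$-spindle, not the lense, that is elongated along an axis. So the true difficulty is not controlling ``lateral spread'' of $P^r$ against a transversally pinched lense; rather, the lense is the equatorially fattest $r$-ball body of its inradius, and the conjecture asks one to show that this equatorial fattening dominates every intrinsic volume. Second, $r_{in}(P^r)=r-r_0$ holds unconditionally whenever $r_{cr}(P)=r_0$, not only for ``diametral configurations'': if $\B^d[\z,\rho]\subseteq P^r$ with $\rho>r-r_0$, then every $\p_i$ lies in $\B^d[\z,r-\rho]$, contradicting $r_{cr}(P)=r_0$. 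This actually strengthens your Step~1 (the inradius-normalized reformulation is an exact equivalence, as in Remark~\ref{equivalent-lense}), but it does not help close the gap. A route more consonant with the paper's own partial result would be to sharpen the symmetrization step: the loss in Theorem~\ref{upper-bound-for-intrinsic-volume} occurs exactly because $r_{cr}^{\oo}\left[M_{\oo}(Q)\right]=\frac{1}{2}{\rm diam}(Q)$ can drop to Jung's bound, so any proof of the full conjecture along these lines must use more information about $Q$ than its diameter.
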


\begin{remark}\label{planar case}
Recall that according to \cite{BoDr} (see also \cite{FoKuVi}) the $r$-lense has maximal perimeter among $r$-ball bodies of equal area in $\Ee^2$. This statement and Theorem~\ref{max-volume-lense} imply Conjecture~\ref{max-intrinsic-volume-lense} for $d=2$ and $k=1$. Hence, if $r>r_0>0$, $N>1$, $d=2$, $k=1$, and $P:=\{\p_1,\p_2,\dots ,\p_N\}\subset\Ee^2$ with $r_{cr}(P)=r_0$, then 
\begin{equation}
V_1\left(P^r\right) \leq V_1\left(L_{r, r-r_0, 2}\right).
\end{equation}\end{remark}

\begin{definition}\label{r-convex hull}
Let $\emptyset\neq K\subset\Ee^d$, $d>1$ and $r>0$. Then the {\rm $r$-ball convex hull} ${\rm conv}_rK$ of $K$ is defined by ${\rm conv}_rK:=\bigcap\{ \B^d[\x, r]\ |\ K\subseteq \B^d[\x, r]\}$. Moreover, let the $r$-ball convex hull of $\Ee^d$ be $\Ee^d$. Furthermore, we say that  $K\subseteq\Ee^d$ is {\rm $r$-ball convex} if $K={\rm conv}_rK$. 
\end{definition}

We note that clearly, ${\rm conv}_rK=\emptyset$ if and only if $K^r=\emptyset$. Moreover, $\emptyset\neq K\subset\Ee^d$ is $r$-ball convex if and only if $K$ is an $r$-ball body. If $K:=\{\x,\y\}\subset\Ee^d$ with $0<\|\x-\y\|\leq 2r$, then ${\rm conv}_rK$ is called an {\it $r$-spindle} with $r_{cr}=\frac{1}{2}\|\x-\y\|$. In particular, we are going to use the notation $S_{r, \lambda, d}$ for an $r$-spindle whose circumradius is $\lambda$ in $\Ee^d$, where $r\geq\lambda>0$.

\begin{theorem}\label{min-volume-spindle}
Let $r>r_0>0$, $N>1, d>1$, and let $P:=\{\p_1,\p_2,\dots ,\p_N\}\subset\Ee^d$ with $r_{cr}(P)=r_0$. Then
\begin{equation}\label{Bezdek-4}
V_d(S_{r, r_0, d})\leq V_d\left({\rm conv}_rP\right).
\end{equation}
\end{theorem}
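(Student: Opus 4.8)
The plan is to read \eqref{Bezdek-4} as the \emph{dual}, under the $r$-ball body operation, of Theorem~\ref{max-volume-lense}. First I would record the elementary identities relating $X\mapsto X^r$ and $X\mapsto {\rm conv}_rX$. Since $P\subseteq\B^d[\x,r]$ is equivalent to $\x\in P^r$, Definition~\ref{r-convex hull} gives ${\rm conv}_rP=\bigcap_{\x\in P^r}\B^d[\x,r]=(P^r)^r$; combined with the easily verified relation $X^{rrr}=X^r$ this also yields $({\rm conv}_rP)^r=P^r$. I would also use that a ball of radius at most $r$ is $r$-ball convex and that $(\B^d[\oo,a])^r=\B^d[\oo,r-a]$ for $0<a\le r$.

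Next I would pin down the two radii. Writing $\B^d[\oo,r_0]$ for the circumball of $P$ (so $r_0=r_{cr}(P)<r$), the fact that $\B^d[\oo,r_0]$ is $r$-ball convex and contains $P$ forces ${\rm conv}_rP\subseteq\B^d[\oo,r_0]$, while ${\rm conv}_rP\supseteq P$ gives the reverse comparison of circumradii; hence $r_{cr}({\rm conv}_rP)=r_0=r_{cr}(S_{r,r_0,d})$. By Remark~\ref{equivalent-lense}, $r_{in}(P^r)=r-r_0=:\rho$, with inball $\B^d[\oo,\rho]$. Finally, writing the spindle as $S_{r,r_0,d}={\rm conv}_r\{\x,\y\}=(\{\x,\y\}^r)^r$ with $\|\x-\y\|=2r_0$, and noting $\{\x,\y\}^r=\B^d[\x,r]\cap\B^d[\y,r]=L_{r,\rho,d}$, I obtain $S_{r,r_0,d}=(L_{r,\rho,d})^r$. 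Thus both sides of \eqref{Bezdek-4} are $r$-ball bodies of the $r$-ball bodies $P^r$ and $L_{r,\rho,d}$, and these two generators share the common inradius $\rho$ and the common inball $\B^d[\oo,\rho]$.

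With this dictionary, \eqref{Bezdek-4} becomes the statement that, among all $r$-ball bodies $\A$ of inradius $\rho$, the lens $L_{r,\rho,d}$ \emph{minimizes} $V_d(\A^r)$. Since Theorem~\ref{max-volume-lense} asserts that the lens simultaneously \emph{maximizes} $V_d(\A)$ in the same class, it suffices to convert this maximality of $V_d(\A)$ into minimality of $V_d(\A^r)$. I would attempt this by a symmetrization about a line through $\oo$ (e.g.\ spherical symmetrization), carried out so as to preserve the inball $\B^d[\oo,\rho]$ and the $r$-ball-body structure, reducing to bodies of revolution and allowing a direct comparison of the radius-$r$ profile arcs of $\A$ and $L_{r,\rho,d}$ that can then be transferred to the dual bodies $\A^r$ and $S_{r,r_0,d}$.

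The main obstacle is exactly this transfer: the map $\A\mapsto\A^r$ is inclusion-reversing but is \emph{not} comonotone for volume, so maximality of $V_d(\A)$ does not formally entail minimality of $V_d(\A^r)$. Arranging the symmetrization to respect the inradius constraint while provably decreasing $V_d(\A^r)$ is delicate; an attractive alternative would be to establish a reverse Blaschke--Santal\'o-type product inequality $V_d(\A)\,V_d(\A^r)\ge V_d(L_{r,\rho,d})\,V_d(S_{r,r_0,d})$ for $r$-ball bodies of inradius $\rho$, since combined with $V_d(\A)\le V_d(L_{r,\rho,d})$ from Theorem~\ref{max-volume-lense} it yields \eqref{Bezdek-4} immediately. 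Identifying the correct extremal product inequality, together with its equality case, is the crux of the argument.
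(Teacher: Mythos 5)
Your setup is sound and in fact matches the paper's framework: ${\rm conv}_rP=(P^r)^r$, $r_{cr}({\rm conv}_rP)=r_0$, $r_{in}(P^r)=r-r_0=:\rho$, and $S_{r,r_0,d}=\bigl(L_{r,\rho,d}\bigr)^r$, so the claim is indeed equivalent to saying that the $r$-lense minimizes $V_d(\A^r)$ among the relevant $r$-ball bodies $\A$ of inradius $\rho$. But the argument stops exactly where the proof has to begin. You correctly diagnose that $\A\mapsto\A^r$ is inclusion-reversing yet not volume-comonotone, so Theorem~\ref{max-volume-lense} does not transfer; your two proposed remedies --- a symmetrization that ``provably decreases $V_d(\A^r)$'' and a reverse product inequality $V_d(\A)\,V_d(\A^r)\ge V_d(L_{r,\rho,d})\,V_d(S_{r,r_0,d})$ --- are both left entirely unestablished, and you say so yourself. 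Neither is known, neither is derived from anything in the paper, and the product inequality in particular is a substantial new conjecture, not a lemma one can wave at. As written, the proposal is a reformulation of the theorem plus an open problem, not a proof.

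The paper does not go through any such duality transfer. It proves the spindle bound directly by the same concentric-sphere slicing used for Theorem~\ref{max-volume-lense}: after reducing to a simplex $Q=\{\p_1,\dots,\p_{l+1}\}\subset r_0\Ss^{d-1}$ with $\oo\in{\rm relint}({\rm conv}Q)$ and noting $r_{in}^{\oo}(S_{r,r_0,d})=r-\sqrt{r^2-r_0^2}\le r_{in}^{\oo}({\rm conv}_rQ)$, one writes $V_d(\cdot)=\int_0^{r_0}\sigma(x\Ss^{d-1}\cap\,\cdot\,)\,dx$ and compares slices. For $x\le r-\sqrt{r^2-r_0^2}$ the slices agree; for larger $x$ the slice of the spindle is the union of two antipodal caps of some angular radius $\epsilon$, while the slice of ${\rm conv}_rQ$ contains the union of the caps of the same radius $\epsilon$ centered at $\frac{x}{r_0}\p_i$, points which do not lie in an open hemisphere. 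Lemma~\ref{spherical-neighbourhood-sets} (in its direct form ${SV}_d(\mathbf{X}_{\epsilon})\geq{SV}_d(\hat{\mathbf{X}}_{\epsilon})$, i.e.\ lower-bounding the $\epsilon$-neighbourhood rather than upper-bounding its complement as in the lense proof) then gives the slicewise inequality, and integration finishes the proof. If you want to salvage your write-up, replace the speculative transfer step with this slicing comparison; the duality bookkeeping you did is then unnecessary.
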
 

\begin{remark}\label{equivalent-spindle}
Clearly, Theorem~\ref{min-volume-spindle} is equivalent to the statement that among $r$-ball bodies of given volume in $\Ee^d$, the $r$-spindle has the largest circumradius.
\end{remark}

\begin{corollary}\label{lower-bound-for-intrinsic-volume}
Let $r>r_0>0$, $N>1, d>k>0$, and let $P:=\{\p_1,\p_2,\dots ,\p_N\}\subset\Ee^d$ with $r_{cr}(P)=r_0$. Then
%\begin{equation}\label{Bezdek-5}
%V_1\left(S_{r, \sqrt{\frac{d+1}{2d}}r_0, d}\right)\leq V_1\left({\rm conv}_rP\right)
%\end{equation}
%and for $1<k<d$ we have
\begin{equation}\label{Bezdek-6}
\frac{\binom{d}{k}\omega_d^{1-\frac{k}{d}}}{\omega_{d-k}}\left(V_d(S_{r, r_0, d})\right)^{\frac{k}{d}}\leq V_k\left({\rm conv}_rP\right).
\end{equation}
Moreover, if $r>r_0>0$, $N>1$, $d=2$, $k=1$, and $P:=\{\p_1,\p_2,\dots ,\p_N\}\subset\Ee^2$ with $r_{cr}(P)=r_0$, then 
\begin{equation}\label{Bezdek-5}
V_1\left(S_{r, r_0, 2}\right)\leq V_1\left({\rm conv}_rP\right).
\end{equation}
\end{corollary}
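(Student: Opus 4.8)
The plan is to derive (\ref{Bezdek-6}) from Theorem~\ref{min-volume-spindle} together with the isoperimetric inequality for intrinsic volumes. The relevant quermassintegral inequality (a consequence of the Alexandrov--Fenchel inequalities) states that the ball minimizes each $V_k$ among convex bodies of fixed volume, i.e. for every convex body $\K\subset\Ee^d$ and $0<k<d$,
\[
V_k(\K)\ \ge\ \frac{\binom{d}{k}\,\omega_d^{1-\frac{k}{d}}}{\omega_{d-k}}\,\big(V_d(\K)\big)^{\frac{k}{d}},
\]
the constant being forced by $V_k(\B^d[\oo,1])=\binom{d}{k}\omega_d/\omega_{d-k}$. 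I would apply this with $\K:={\rm conv}_rP$ and then insert the bound $V_d({\rm conv}_rP)\ge V_d(S_{r,r_0,d})$ of Theorem~\ref{min-volume-spindle}; since $t\mapsto t^{k/d}$ is increasing, (\ref{Bezdek-6}) follows at once. This part is essentially routine.

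For (\ref{Bezdek-5}) (the case $d=2$, $k=1$) note $V_1=\tfrac12\,{\rm per}$, so the assertion is ${\rm per}(S_{r,r_0,2})\le{\rm per}({\rm conv}_rP)$. First I would record that $r_{cr}({\rm conv}_rP)=r_0$: the inclusion ${\rm conv}(P)\subseteq{\rm conv}_rP$ gives ``$\ge$'', while if $\B^2[\ccc,r_0]$ is the circumscribed disk of $P$ and $\|\z-\ccc\|>r_0$, then $\B^2[\ccc-(r-r_0)\tfrac{\z-\ccc}{\|\z-\ccc\|},r]$ still contains $P$ but excludes $\z$, so $\z\notin{\rm conv}_rP$; hence ${\rm conv}_rP\subseteq\B^2[\ccc,r_0]$ and ``$\le$'' follows. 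Place $\ccc=\oo$.

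Now $\partial({\rm conv}_rP)$ meets the circumscribed circle in points $\q_1,\dots,\q_m$ whose convex hull contains $\oo$ (the defining property of the smallest enclosing circle); writing $\phi_i$ for the successive angular gaps, one has $\phi_i\in(0,\pi]$ and $\sum_i\phi_i=2\pi$. Each boundary piece joining $\q_i$ to $\q_{i+1}$ is a convex arc of curvature $\ge 1/r$ (built from radius-$r$ arcs and convex corners) lying inside the disk and spanning a chord of length $\ell_i=2r_0\sin(\phi_i/2)$, so by Schur's comparison theorem its length is at least that of the radius-$r$ arc on the same chord, namely $2r\arcsin(\ell_i/2r)$. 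Summing,
\[
{\rm per}({\rm conv}_rP)\ \ge\ \sum_{i=1}^m f(\phi_i),\qquad f(\phi):=2r\arcsin\!\Big(\tfrac{r_0}{r}\sin\tfrac{\phi}{2}\Big).
\]
A direct computation shows $f$ is concave on $(0,\pi]$ with $f(0)=0$; a concave function attains its minimum over the polytope $\{\phi\in[0,\pi]^m:\sum_i\phi_i=2\pi\}$ at an extreme point, and these are the permutations of $(\pi,\pi,0,\dots,0)$, so $\sum_i f(\phi_i)\ge 2f(\pi)=4r\arcsin(r_0/r)={\rm per}(S_{r,r_0,2})$, which proves (\ref{Bezdek-5}).

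The main obstacle is the sharp per-piece length estimate: one must verify that the radius-$r$ arc really is the shortest convex connection of $\q_i$ and $\q_{i+1}$ of curvature $\ge 1/r$ inside the disk, so that the convex corners of the ball polygon can only lengthen the boundary. Schur's theorem settles this because every chord has length $\le 2r_0<2r$, keeping the comparison arcs sub-semicircular (so that $t\mapsto 2r\sin(t/2r)$ stays monotone); some care is still needed when a contact point is itself a vertex of ${\rm conv}_rP$, though this affects only the labelling of the pieces and not the chordwise bound. Alternatively, ${\rm per}(S_{r,r_0,2})\le{\rm per}({\rm conv}_rP)$ may be quoted as the perimeter-minimizing companion (for fixed circumradius) of the perimeter-maximizing result of \cite{BoDr} invoked in Remark~\ref{planar case}.
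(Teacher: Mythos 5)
Your derivation of (\ref{Bezdek-6}) coincides with the paper's: the extended isoperimetric (quermassintegral) inequality normalized via $V_k(\B^d[\oo,1])=\binom{d}{k}\omega_d/\omega_{d-k}$, followed by Theorem~\ref{min-volume-spindle} and the monotonicity of $t\mapsto t^{k/d}$. Nothing to add there.

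For (\ref{Bezdek-5}) you take a genuinely different route, and it is the route that carries a gap. The paper avoids any direct perimeter estimate: by Proposition 2.5 of \cite{BeNa} and (\ref{Bezdek-9a}) one has $\B^2[\oo,r]={\rm conv}_rP-P^r$, so Minkowski additivity of $V_1$ gives $V_1({\rm conv}_rP)=V_1(\B^2[\oo,r])-V_1(P^r)$; inserting the already available upper bound $V_1(P^r)\le V_1(L_{r,r-r_0,2})$ of Remark~\ref{planar case} and applying the same identity to the pair $\left(S_{r,r_0,2},\,L_{r,r-r_0,2}\right)$ yields (\ref{Bezdek-5}) in two lines. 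Your direct argument --- decompose $\mathrm{bd}({\rm conv}_rP)$ at its contact points with the circumscribed circle, bound each piece from below by the radius-$r$ arc on its chord, and minimize the concave sum $\sum_i f(\phi_i)$ over the gap polytope --- is structurally sound: the identity $r_{cr}({\rm conv}_rP)=r_0$, the concavity of $f$, and the extreme points $(\pi,\pi,0,\dots,0)$ all check out, and $2f(\pi)$ is indeed the perimeter of $S_{r,r_0,2}$. But the per-piece length estimate is exactly the nontrivial content, and you assert rather than prove it. Schur's theorem in its standard form compares two curves of \emph{equal length} under a pointwise curvature hypothesis for $C^1$ curves; your pieces carry curvature atoms at the vertices of the ball polygon, you must check that each piece together with its chord bounds a convex region (total turning at most $\pi$) before the comparison applies, and you must then invert the conclusion from ``equal length, compare chords'' to ``equal chord, compare lengths.'' A corner-tolerant version of Schur's theorem does exist and the inversion is harmless (if the piece has length exceeding $\pi r$ the claim is trivial), so the approach can be completed, but as written the key inequality is unproved. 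Your fallback of citing \cite{BoDr} does not close it either: that reference gives the perimeter-\emph{maximizing} property of the $r$-lense at fixed \emph{area}, not a perimeter-minimizing statement for spindles at fixed circumradius; the way \cite{BoDr} legitimately enters is through Remark~\ref{planar case}, i.e., through the paper's Minkowski-sum route.
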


We conclude this section by raising

\begin{conjecture}\label{min-intrinsic-volume-spindle}
Let $r>r_0>0$, $N>1, d>k>0$, and let $P:=\{\p_1,\p_2,\dots ,\p_N\}\subset\Ee^d$ with $r_{cr}(P)=r_0$. Then
\begin{equation}\label{Bezdek-7}
V_k(S_{r, r_0, d})\leq V_k\left({\rm conv}_rP\right).
\end{equation}
\end{conjecture}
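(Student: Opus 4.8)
The plan I would attempt starts from the equivalent reformulation recorded in Remark~\ref{equivalent-spindle}: inequality (\ref{Bezdek-7}) asserts that among all $r$-ball bodies of circumradius $r_0$ in $\Ee^d$, the spindle $S_{r,r_0,d}$ minimizes the $k$th intrinsic volume. The first thing to settle is that the circumradius of the competitor is exactly $r_0$. Indeed, if $B^d[\ccc,r_0]$ is the smallest ball containing $P$, then for every unit vector $\uu$ one has $B^d[\ccc,r_0]\subseteq B^d[\ccc+(r-r_0)\uu,r]$, and $\bigcap_\uu B^d[\ccc+(r-r_0)\uu,r]=B^d[\ccc,r_0]$; since each of these $r$-balls contains $P$, it follows that ${\rm conv}_r P\subseteq B^d[\ccc,r_0]$, so $r_{cr}({\rm conv}_r P)=r_0$. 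The naive monotonicity route, however, must be abandoned: $r_{cr}(P)=r_0$ does not force a diametral pair at distance $2r_0$ (take the vertices of a regular simplex inscribed in a ball of radius $r_0$), so the long, thin spindle generally does not embed in ${\rm conv}_r P$, and one cannot simply invoke monotonicity of $V_k$ under inclusion. Any proof must therefore be integral-geometric.

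The main line I would pursue uses Kubota's projection formula. Writing $(\,\cdot\,)\mid E$ for orthogonal projection onto a $k$-dimensional subspace $E$ and $G(d,k)$ for the Grassmannian with its invariant measure, one has
\[
V_k({\rm conv}_r P)=c_{d,k}\int_{G(d,k)}{\rm vol}_k\big(({\rm conv}_r P)\mid E\big)\,dE,
\]
with the same identity for $S_{r,r_0,d}$ and the usual dimensional constant $c_{d,k}$. The target inequality then reduces to
\[
\int_{G(d,k)}{\rm vol}_k\big(({\rm conv}_r P)\mid E\big)\,dE\ \ge\ \int_{G(d,k)}{\rm vol}_k\big(S_{r,r_0,d}\mid E\big)\,dE.
\]
The rotational symmetry of $S_{r,r_0,d}$ about its axis makes the right-hand integrand depend only on the angle between $E$ and the axis, so the right-hand side can be computed, or at least sharply estimated, explicitly. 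The remaining task is to bound the left-hand side from below by the same quantity.

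Here I would try to exploit Theorem~\ref{min-volume-spindle}, which already disposes of the case $k=d$, together with the controlled behaviour of circumradius under projection, namely $r_{cr}\big(({\rm conv}_r P)\mid E\big)\le r_{cr}({\rm conv}_r P)=r_0$. The hoped-for mechanism is a per-subspace replacement in which each projection is exchanged, without increasing its $k$-volume, for a lower-dimensional spindle of the appropriate circumradius, after which the $k$-dimensional instance of the volume theorem is applied. The main obstacle — and the reason (\ref{Bezdek-7}) remains a conjecture — is precisely this step: projections of $r$-ball bodies are not themselves $r$-ball bodies (nor even ball bodies of any fixed radius), so Theorem~\ref{min-volume-spindle} cannot be invoked directly on $({\rm conv}_r P)\mid E$. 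Moreover the spindle is anisotropic, so a uniform per-direction inequality is simply false, the spindle being \emph{maximal}, not minimal, in its axis direction; only the averaged inequality can hold, and proving it seems to require a genuinely new monotonicity, for instance a shadow-system or variational argument that contracts $P$ toward a diametral pair while showing $V_k$ does not increase. Controlling $V_k$ under such a contraction is exactly the open point.

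Finally, I would anchor the argument on the cases already within reach. The planar instance $d=2$, $k=1$ is settled in (\ref{Bezdek-5}) of Corollary~\ref{lower-bound-for-intrinsic-volume} and can serve as the base case of an induction. The low-codimension case $k=d-1$ looks most tractable, since $2V_{d-1}$ is the surface area and Cauchy's formula rewrites it as an average of $(d-1)$-dimensional projection volumes onto hyperplanes; this is the $k=d-1$ specialization of the averaged projection inequality above, and establishing it would both test the method and suggest the correct form of the monotonicity needed for general $k$.
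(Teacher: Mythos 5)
The statement you were asked to prove is labelled a \emph{conjecture} in the paper (Conjecture~\ref{min-intrinsic-volume-spindle}), and the paper offers no proof of it; so there is no argument of the author's to compare yours against. Your submission, read on its own terms, is also not a proof: it is a reduction plus an honest admission that the reduced problem is open. The preliminary observations you make are correct and worth recording --- that $r_{cr}({\rm conv}_rP)=r_0$ (your argument via $\bigcap_{\mathbf{u}}\B^d[\ccc+(r-r_0)\mathbf{u},r]=\B^d[\ccc,r_0]$ is sound), that no diametral pair at distance $2r_0$ need exist so inclusion monotonicity of $V_k$ is unavailable, that Kubota's formula reduces the claim to an averaged inequality for $k$-dimensional shadows, and that the pointwise (per-subspace) version of that inequality is false because the spindle's shadows are maximal in the axial direction. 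But the step you label as ``the hoped-for mechanism'' --- replacing each projection by a lower-dimensional spindle without increasing $k$-volume, or equivalently a contraction of $P$ toward a diametral pair under which $V_k$ does not increase --- is precisely the content of the conjecture, and you supply no argument for it. A proposal whose central step is declared open cannot be counted as a proof, however accurate its diagnosis of where the difficulty lies.

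For orientation: what the paper actually establishes in this direction is Corollary~\ref{lower-bound-for-intrinsic-volume}. The weaker bound (\ref{Bezdek-6}) follows from Theorem~\ref{min-volume-spindle} (the $k=d$ case, which you correctly cite as settled) combined with the extended isoperimetric inequality (\ref{Bezdek-39}) and the identity (\ref{Bezdek-40}) for $V_k$ of a ball --- note this gives $V_k({\rm conv}_rP)\geq c\,(V_d(S_{r,r_0,d}))^{k/d}$, not the conjectured $V_k({\rm conv}_rP)\geq V_k(S_{r,r_0,d})$. The exact case $d=2$, $k=1$ in (\ref{Bezdek-5}) is obtained not by Cauchy's formula but by the duality identity $\B^2[\oo,r]={\rm conv}_rP-P^r$ from (\ref{Bezdek-35}), which converts the perimeter-maximality of the lense (Remark~\ref{planar case}, resting on the Borisenko--Drach inequality) into perimeter-minimality of the spindle. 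If you want to push your program, that duality --- trading the spindle conjecture for the lense Conjecture~\ref{max-intrinsic-volume-lense} via $V_1(\B^d[\oo,r])=V_1({\rm conv}_rP)+V_1(P^r)$, which holds for $k=1$ in all dimensions by linearity of mean width under Minkowski addition --- is likely a more promising lever than a per-subspace Kubota argument, at least for $k=1$.
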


\begin{remark}\label{extending-Linhart}
Conjecture~\ref{min-intrinsic-volume-spindle} for $k=1$ states that among $r$-ball bodies of given circumradius the $r$-spindle possesses the smallest
mean width. If true, then this result could be regarded as an extension of the relevant inequality of Linhart (see inequality (1) in \cite{Li}) from convexity to $r$-convexity.
\end{remark}

In the rest of the paper we prove the theorems stated.

\section{Proof of Theorem~\ref{max-volume-lense}}

\begin{lemma}\label{max-cr}
Let $r>r_0>0$, $N>1, d>1$, and let $P:=\{\p_1,\p_2,\dots ,\p_N\}\subset\B^d[\oo, r_0]$ with $r_{cr}(P)=r_0$. Then
\begin{equation}\label{Bezdek-8}
P^r\subset\B^d\left[\oo, \sqrt{r^2-r_0^2}\right]
\end{equation}
and so, $r_{cr}(P^r)\leq r_{cr}(L_{r, r-r_0, d})=\sqrt{r^2-r_0^2}$.
\end{lemma}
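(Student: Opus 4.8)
The plan is to show that the hypotheses force $\oo$ to be the circumcenter of $P$, to extract a linear relation satisfied by the ``farthest'' points of $P$, and then to feed this relation into the defining inequalities of $P^r$.

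First I would note that $\B^d[\oo, r_0]$ is an enclosing ball of $P$ of radius exactly $r_0=r_{cr}(P)$; since the minimal enclosing ball is unique, $\oo$ is the circumcenter of $P$. Let $F:=\{i \st \|\p_i\|=r_0\}$ be the set of contact points lying on the sphere $\{\x \st \|\x\|=r_0\}$. The key structural fact I would invoke is the classical characterization of the circumcenter: $\oo\in{\rm conv}\{\p_i \st i\in F\}$. I would justify this by a short separation/perturbation argument. If $\oo\notin{\rm conv}\{\p_i \st i\in F\}$, a separating hyperplane yields a unit vector $\mathbf{v}$ with $\iprod{\p_i}{\mathbf{v}}>0$ for all $i\in F$; translating the center to $\epsilon\mathbf{v}$ gives $\|\p_i-\epsilon\mathbf{v}\|^2=r_0^2-2\epsilon\iprod{\p_i}{\mathbf{v}}+\epsilon^2<r_0^2$ for small $\epsilon>0$ at every contact point, while by continuity the finitely many non-contact points (for which $\|\p_j\|<r_0$) stay strictly inside radius $r_0$. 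This produces an enclosing ball of radius $<r_0$, contradicting $r_{cr}(P)=r_0$. Hence there are weights $\lambda_i\geq 0$ $(i\in F)$ with $\sum_{i\in F}\lambda_i=1$ and $\sum_{i\in F}\lambda_i\p_i=\oo$.

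Next I would take an arbitrary $\q\in P^r$, so that $\|\q-\p_i\|\leq r$ for every $i$, in particular for $i\in F$. Expanding $\|\q-\p_i\|^2=\|\q\|^2-2\iprod{\q}{\p_i}+\|\p_i\|^2\leq r^2$ and using $\|\p_i\|=r_0$ for $i\in F$, I would form the convex combination with weights $\lambda_i$. The cross term $\sum_{i\in F}\lambda_i\iprod{\q}{\p_i}=\iprod{\q}{\sum_{i\in F}\lambda_i\p_i}=\iprod{\q}{\oo}=0$ vanishes, while the remaining terms give $\|\q\|^2+r_0^2\leq r^2$. Thus $\|\q\|\leq\sqrt{r^2-r_0^2}$, which proves $P^r\subset\B^d[\oo,\sqrt{r^2-r_0^2}]$ and hence $r_{cr}(P^r)\leq\sqrt{r^2-r_0^2}$. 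It remains to check the value $r_{cr}(L_{r,r-r_0,d})=\sqrt{r^2-r_0^2}$, a direct computation: writing the lense of inradius $r-r_0$ as $\B^d[r_0\mathbf{e},r]\cap\B^d[-r_0\mathbf{e},r]$ for a unit vector $\mathbf{e}$, one sees its rim lies in $\mathbf{e}^{\perp}$ and consists of the points $\x$ with $\|\x\|^2+r_0^2=r^2$; these are the points farthest from the symmetry center $\oo$, so the circumcenter is $\oo$ and the circumradius is $\sqrt{r^2-r_0^2}$.

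The main obstacle is the circumcenter characterization $\oo\in{\rm conv}\{\p_i \st i\in F\}$; once this is in hand, the averaging step and the final computation are routine. I expect to present this characterization either via the brief perturbation argument sketched above or by citing the standard theory of minimal enclosing (Chebyshev) balls.
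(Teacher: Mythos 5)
Your proof is correct, and it takes a genuinely different and more elementary route than the paper's. You identify $\oo$ as the (unique) circumcenter, invoke the standard optimality condition $\oo\in{\rm conv}\{\p_i \st i\in F\}$ for the contact set $F$, and then a single averaging of the expanded inequalities $\|\q\|^2-2\iprod{\q}{\p_i}+r_0^2\le r^2$ against the convex weights $\lambda_i$ kills the cross term and gives $\|\q\|^2+r_0^2\le r^2$ directly; your perturbation argument for the optimality condition and your computation of $r_{cr}(L_{r,r-r_0,d})$ are both sound. The paper instead routes the claim through the duality $P^r=({\rm conv}_rP)^r$ and $(P^r)^r={\rm conv}_rP$, which yields $r_{in}^{\oo}({\rm conv}_rP)+r_{cr}^{\oo}(P^r)\leq r$, and then proves the lower bound $r-\sqrt{r^2-r_0^2}\leq r_{in}^{\oo}({\rm conv}_rP)$ via supporting $r$-balls of ${\rm conv}_r\Delta$ for a simplex $\Delta$ of contact points with $\oo$ in its relative interior. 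Your approach buys brevity and avoids the supporting-$r$-sphere machinery of \cite{BLNP} entirely; what the paper's longer detour buys is the intermediate inradius estimates (\ref{Bezdek-10}) and (\ref{Bezdek-11}), which are cited again verbatim in the proof of Theorem~\ref{min-volume-spindle} (see (\ref{Bezdek-31})). So if your argument were substituted for the paper's, that inradius bound on ${\rm conv}_rQ$ would still have to be established separately later --- though it too follows from a dual version of your averaging trick, since for $\|\y\|\le r-\sqrt{r^2-r_0^2}$ and any center $\x$ with $\B^d[\x,r]\supseteq Q$ one has $\|\x\|\le\sqrt{r^2-r_0^2}$ by the same computation applied to $Q\subset\B^d[\x,r]$, whence $\|\y-\x\|\le r$.
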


\begin{proof}
First, recall that Lemma 5 of \cite{Be18} and (ii) of Corollary 3.4 of \cite{BLNP} imply 
\begin{equation}\label{Bezdek-9a}
P^r=({\rm conv}_rP)^r  \ {\rm and}\ \left(P^r\right)^r={\rm conv}_rP
\end{equation}
from which it follows in a straightforward way that
\begin{equation}\label{Bezdek-9b}
r_{in}^{\oo}\left({\rm conv}_rP\right)+r_{cr}^{\oo}(P^r)\leq r,
\end{equation}
where $r_{in}^{\oo}\left({\rm conv}_rP\right):=\max\{\rho\ |\ B^d[\oo,\rho]\subset {\rm conv}_rP\}$ and $r_{cr}^{\oo}\left(P^r\right):=\min\{\lambda\ |\ P^r\subset B^d[\oo,\lambda]\}$. 
Thus, (\ref{Bezdek-9b}) implies that in order to prove (\ref{Bezdek-8}) it is sufficient to show 
\begin{equation}\label{Bezdek-10}
r_{in}\left(S_{r, r_0, d}\right)=r-\sqrt{r^2-r_0^2}\leq  r_{in}^{\oo}\left({\rm conv}_rP\right),
\end{equation}
where $S_{r, r_0, d}$ is an $r$-spindle with circumradius $r_0$. Next, without loss of generality, we may assume
that the circumscribed ball of $S_{r, r_0, d}$ is $\B^d[\oo, r_0]$ and the inscribed ball of $S_{r, r_0, d}$
is $\B^d\left[\oo, r-\sqrt{r^2-r_0^2}\right]$. As $\B^d[\oo, r_0]$ is the smallest ball containing the {\it convex hull} ${\rm conv}P$ of $P$ (resp., ${\rm conv}_rP$), therefore there must exist a simplex $\Delta$ of dimension $l$ ($1\leq l\leq d$) spanned by $l+1$ points of $P$ lying on $r_0\Ss^{d-1}={\rm bd}(\B^d[\oo, r_0])$ such that $\oo\in{\rm relint}(\Delta)$, where ${\rm bd}(\cdot)$ (resp., ${\rm relint}(\cdot)$) refers to the {\it boundary} (resp., {\it relative interior}) of the corresponding set in $\Ee^d$. (Clearly, the circumscribed ball of $\Delta$ (resp., ${\rm conv}_r\Delta$) is $\B^d[\oo, r_0]$.) Without loss of generality, we may assume that $\Delta={\rm conv}\{\p_1,\p_2,\dots ,\p_{l+1}\}$ with $\{\p_1,\p_2,\dots ,\p_{l+1}\}\subset r_0\Ss^{d-1}$. As ${\rm conv}_r\Delta\subseteq {\rm conv}_rP$ therefore if 
\begin{equation}\label{Bezdek-11}
\B^d\left[\oo, r-\sqrt{r^2-r_0^2}\right]\subseteq{\rm conv}_r\Delta
\end{equation} 
holds, then (\ref{Bezdek-10}) follows. So, we are left to show that indeed, (\ref{Bezdek-11}) holds. In order to see this, recall Lemma 3.1 and Corollary 3.4 of \cite{BLNP} according to which for each boundary point $\p$ of ${\rm conv}_r\Delta$ there exists a $(d-1)$-dimensional sphere $S$ of radius $r$ (called {\it supporting $r$-sphere of ${\rm conv}_r\Delta$}) that bounds a ball $\B$ (called {\it supporting $r$-ball of ${\rm conv}_r\Delta$}) in $\Ee^d$ such that ${\rm conv}_r\Delta\subseteq\B$ and $\p\in S\cap {\rm conv}_r\Delta$. Moreover, ${\rm conv}_r\Delta$ is the intersection of its supporting $r$-balls. Thus, (\ref{Bezdek-11}) follows if one can prove that 
\begin{equation}\label{Bezdek-12}
\B^d\left[\oo, r-\sqrt{r^2-r_0^2}\right]\subseteq\B
\end{equation}
holds for any supporting $r$-ball $\B$ of ${\rm conv}_r\Delta$. Finally, we prove (\ref{Bezdek-12}) as follows. First, we note that clearly, $\p_i\in S={\rm bd}(\B)$ for some $1\leq i\leq l+1$. Moreover, $\{\p_i,-\p_i\}\subset r_0\Ss^{d-1}$ and ${\rm conv}_r\{\p_i,-\p_i\}$ is an $r$-spindle of inradius $r-\sqrt{r^2-r_0^2}$. Hence, if $-\p_i\in\B$, then one obtains (\ref{Bezdek-12}) in a straightforward way. So, the case left is when $-\p_i\notin\B$. But then, $\B\cap r_0\Ss^{d-1}$ is a spherical cap of angular radius $<\frac{\pi}{2}$ on 
$ r_0\Ss^{d-1}$ containing $\{\p_1,\p_2,\dots ,\p_{l+1}\}$ and clearly implying that $\oo\notin  {\rm relint}(\Delta)$, a contradiction. This completes the proof of Lemma~\ref{max-cr}.
\end{proof}

For the purpose of the next statement recall that $\mathbf{B}_{\Ss^d}[\mathbf{x}, \epsilon]$ denotes the closed ball of angular radius $\epsilon\le\frac{\pi}{2}$ centered at the point $\mathbf{x}$ in $\mathbb{S}^d$. Furthermore, for any subset ${X}$ of $\mathbb{S}^d$ let ${X}_{\epsilon}:=\cup_{\mathbf{x}\in {X}}\mathbf{B}_{\Ss^d}[\mathbf{x}, \epsilon]$ be called the {\it $\epsilon$-neighbourhood} of ${X}$ in $\mathbb{S}^d$. The following statement, which we need for the proof of Theorem~\ref{max-volume-lense}, has been proved by Akopyan and Karasev (see Lemma 7 in \cite{AkKa12} as well as Lemma 8.4.3 in \cite{Be}). In what follows, we reprove it in a similar but simpler way.

\begin{lemma}\label{spherical-neighbourhood-sets}
Let ${\mathbf{X}}$ be a closed subset of $\mathbb{S}^d$ not lying on an open hemisphere of $\mathbb{S}^d$. Then for any $\epsilon\le\frac{\pi}{2}$ the inequality 
\begin{equation}\label{Bezdek-13}
{SV}_d({\mathbf{X}}_{\epsilon})\geq{SV}_d(\hat{{\mathbf{X}}}_{\epsilon})
\end{equation}
holds, where $\hat{{\mathbf{X}}}$ is a pair of antipodal points of $\mathbb{S}^d$.
\end{lemma}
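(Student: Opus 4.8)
\emph{Set-up and first reduction.} The plan is to pass to the ambient space $\Ee^{d+1}\supseteq\Ss^d$ and write each cap as a halfspace slice, $\mathbf{B}_{\Ss^d}[\x,\epsilon]=\{\y\in\Ss^d\st\iprod{\x}{\y}\geq\cos\epsilon\}$. The hypothesis ``$\mathbf{X}$ lies on no open hemisphere'' is equivalent, via the separation theorem (using that $\mathbf{X}$ is compact, so ${\rm conv}(\mathbf{X})$ is closed), to the statement $\oo\in{\rm conv}(\mathbf{X})\subset\Ee^{d+1}$. Since $\mathbf{X}_\epsilon=\bigcup_{\x\in\mathbf{X}}\mathbf{B}_{\Ss^d}[\x,\epsilon]$ only grows when $\mathbf{X}$ grows, and since we seek a \emph{lower} bound, I would immediately shrink $\mathbf{X}$ to a finite subset: by Carath\'eodory there are at most $d+2$ points of $\mathbf{X}$ whose convex hull still contains $\oo$, and a minimal such subset $\{\x_1,\dots,\x_m\}$ satisfies $\oo\in{\rm relint}\,{\rm conv}\{\x_1,\dots,\x_m\}$ with $2\le m\le d+2$. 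It therefore suffices to treat finite admissible configurations. Finally, since $\epsilon\le\frac{\pi}{2}$ the two antipodal caps making up $\hat{\mathbf{X}}_\epsilon$ are disjoint, so the target inequality is exactly $SV_d(\mathbf{X}_\epsilon)\geq 2\beta$, where $\beta:=SV_d(\mathbf{B}_{\Ss^d}[\x,\epsilon])$ is the measure of a single $\epsilon$-cap.

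\emph{An easy regime and the main tool.} From $\oo=\sum_i\lambda_i\x_i$ with $\lambda_i>0$ one gets $0=\|\sum_i\lambda_i\x_i\|^2=\sum_i\lambda_i^2+\sum_{i\neq j}\lambda_i\lambda_j\iprod{\x_i}{\x_j}$, so some off-diagonal term is negative; hence $\mathbf{X}$ contains two points at geodesic distance $>\frac{\pi}{2}$. When $\epsilon\le\frac{\pi}{4}$ these two points lie at distance $>2\epsilon$, so $\mathbf{X}_\epsilon$ already contains two \emph{disjoint} $\epsilon$-caps and $SV_d(\mathbf{X}_\epsilon)\geq 2\beta$ follows at once; the genuine difficulty is thus confined to $\frac{\pi}{4}<\epsilon\le\frac{\pi}{2}$, where a far pair of caps may overlap and one must use the whole spread of $\mathbf{X}$. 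For this I would invoke two-point symmetrization (polarization) $P_u$ across a great subsphere $u^\perp\cap\Ss^d$: $P_u$ is measure preserving, and its key interaction with unions of equal caps is that polarizing the \emph{centers} cannot enlarge the union. Concretely, writing $\x_i':=P_u\x_i$ (equal to $\x_i$ if $\iprod{\x_i}{u}\ge0$ and to the reflection $R_u\x_i$ otherwise), a short case check on the two hemispheres gives the inclusion $\bigcup_i\mathbf{B}_{\Ss^d}[\x_i',\epsilon]\subseteq P_u\big(\bigcup_i\mathbf{B}_{\Ss^d}[\x_i,\epsilon]\big)$, whence $SV_d\big(\bigcup_i\mathbf{B}_{\Ss^d}[\x_i',\epsilon]\big)\le SV_d(\mathbf{X}_\epsilon)$.

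\emph{Reduction to the antipodal pair.} With this monotonicity in hand the strategy is to symmetrize $\mathbf{X}$ step by step, never increasing $SV_d(\mathbf{X}_\epsilon)$, toward a configuration I can evaluate exactly. The cleanest terminal case is an antipodal pair: if the (polarized) configuration ever contains a pair $\{\p,-\p\}$, then $\mathbf{X}_\epsilon\supseteq\mathbf{B}_{\Ss^d}[\p,\epsilon]\cup\mathbf{B}_{\Ss^d}[-\p,\epsilon]$, two disjoint caps, giving $SV_d(\mathbf{X}_\epsilon)\geq 2\beta$ directly. Equivalently, one shows that the infimum of $SV_d(\mathbf{X}_\epsilon)$ over admissible finite configurations equals $2\beta$ and is attained precisely by $\hat{\mathbf{X}}$; this simultaneously delivers the equality case of the lemma.

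\emph{The main obstacle.} The crux — and the only delicate point — is reconciling the measure-decreasing symmetrization with the topological constraint $\oo\in{\rm conv}(\mathbf{X})$. A naive polarization pushes every point into the closed hemisphere $\{\iprod{\cdot}{u}\ge0\}$, which generically forces $\oo$ out of the convex hull; indeed it is exactly by \emph{leaving} the admissible class that one can drive the cap-union below $2\beta$ (a pair of nearly coincident caps in an open hemisphere has measure $<2\beta$ but violates the hypothesis). Thus I must choose the reflecting great spheres $u^\perp$ so that admissibility is preserved throughout — for instance by polarizing only across subspheres supporting faces of ${\rm conv}\{\x_1,\dots,\x_m\}$, or by running the symmetrization inside a compactness/lower-semicontinuity argument on the (finite-dimensional) space of admissible configurations and analyzing the resulting extremizer. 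Proving that, within the admissible class, this process converges to an antipodal pair — so that the non-hemispherical hypothesis blocks every descent below $2\beta$ — is where the substantive work lies.
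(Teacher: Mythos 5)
Your reduction to finite configurations, the disjointness observation for $\epsilon\le\frac{\pi}{4}$, and the inclusion $\bigcup_i\mathbf{B}_{\Ss^d}[\x_i',\epsilon]\subseteq P_u\bigl(\bigcup_i\mathbf{B}_{\Ss^d}[\x_i,\epsilon]\bigr)$ for two-point symmetrization are all correct. But the proof has a genuine gap exactly where you say "the substantive work lies": in the regime $\frac{\pi}{4}<\epsilon\le\frac{\pi}{2}$ you never actually produce a symmetrization scheme that (a) preserves the constraint $\oo\in{\rm conv}(\mathbf{X})$ at every step and (b) terminates at, or converges to, an antipodal pair. As you yourself observe, a generic polarization destroys admissibility, and it is precisely by leaving the admissible class that the cap-union can drop below $2\beta$; so the choice of reflecting subspheres is not a technicality but the entire content of the lemma. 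The two remedies you sketch (polarizing only across subspheres supporting faces of ${\rm conv}\{\x_1,\dots,\x_m\}$, or a compactness/extremizer analysis) are not carried out, and neither is obviously workable — for instance, a limiting extremizer of a measure functional over admissible configurations need not be an antipodal pair without further argument, and lower semicontinuity of $\mathbf{X}\mapsto SV_d(\mathbf{X}_\epsilon)$ under configuration limits also needs care when caps merge. So what you have is a correct framing of the problem plus an easy half of it, not a proof.

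For contrast, the paper avoids symmetrization entirely and the hypothesis enters in one clean geometric step. Take the spherical Voronoi tiling $\{\mathbf{V}_i\}$ of $\Ss^d$ generated by the finite set $\{\x_1,\dots,\x_m\}$. Because no open hemisphere contains all the $\x_i$, every Voronoi cell satisfies $\mathbf{V}_i\subseteq\mathbf{H}_i$, the closed hemisphere centered at $\x_i$ (a point farther than $\frac{\pi}{2}$ from $\x_i$ is strictly closer to some other $\x_j$). A starshaped-decomposition argument then gives the density bound
\begin{equation*}
\frac{SV_{d}(\B_{\Ss^d}[\x_i,\epsilon]\cap\mathbf{V}_i)}{SV_d(\mathbf{V}_i)}\geq \frac{SV_{d}(\B_{\Ss^d}[\x_i,\epsilon])}{SV_d(\mathbf{H}_i)},
\end{equation*}
and summing over $i$ yields $SV_d(\mathbf{X}_\epsilon)\ge 2\,SV_d(\B_{\Ss^d}[\x,\epsilon])$ directly. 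You may want to either adopt that route or supply the missing convergence-and-admissibility analysis for your polarization scheme; as written, the conclusion does not follow.
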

\begin{proof}
It is sufficient to prove Lemma~\ref{spherical-neighbourhood-sets} for finite $\mathbf{X}$ say, for $\mathbf{X}:=\{\mathbf{x}_1, \mathbf{x}_2, \dots , \mathbf{x}_m\}\subset\mathbb{S}^d$. Take the (nearest point) Voronoi tiling of $\mathbb{S}^d$ generated by $\mathbf{X}$ with $\mathbf{V}_i$ standing for the Voronoi cell assigned to the point $\mathbf{x}_i$, $1\le i\le m$. Let $\mathbf{H}_i$ be the closed hemisphere of $\mathbb{S}^d$ centered at $\mathbf{x}_i$, $1\le i\le m$. As, by assumption, $\mathbf{X}$ does not lie on an open hemisphere of $\mathbb{S}^d$ therefore, $\mathbf{V}_i\subseteq \mathbf{H}_i$ holds for all $1\le i\le m$. The following lower bound for the {\it density} $\frac{SV_{d}(\B_{\Ss^d}[\x_i,\epsilon]\cap\mathbf{V}_i)}{SV_d(\mathbf{V}_i)}$ of $\B_{\Ss^d}[\x_i,\epsilon]\cap\mathbf{V}_i$ within $\mathbf{V}_i$, is the core part of our proof of (\ref{Bezdek-13}).
\begin{proposition}\label{density-lower-bound}
\begin{equation}\label{Bezdek-14}
\frac{SV_{d}(\B_{\Ss^d}[\x_i,\epsilon]\cap\mathbf{V}_i)}{SV_d(\mathbf{V}_i)}\geq \frac{SV_{d}(\B_{\Ss^d}[\x_i,\epsilon])}{SV_d(\mathbf{H}_i)}
\end{equation}
holds for all $1\leq i\leq m$.
\end{proposition}
\begin{proof}
For any $\x,\y\in\Ss^d$ with $\x\neq -\y$ let $[\x,\y]_{\Ss^d}$ denote the {\it geodesic segment} connecting $\x$ and $\y$, i.e., let $[\x,\y]_{\Ss^d}$ stand for the shorter closed unit circle arc connecting $\x$ and $\y$ in $\Ss^d$.
\begin{definition}\label{starshaped-sets}
For $\aaa\in {\rm bd}(\mathbf{H}_i)$, $1\leq i\leq m$ let $\bbb:=[\aaa,\x_i]_{\Ss^d}\cap{\rm bd}(\B_{\Ss^d}[\x_i,\epsilon])$ and $\ccc:=[\aaa,\x_i]_{\Ss^d}\cap {\rm bd}(\mathbf{V}_i)$, where ${\rm bd}(\cdot)$ refers to the {\it boundary} of the corresponding set in $\Ss^d$. Then let $\A_i:=\bigcup\{[\aaa,\x_i]_{\Ss^d}|\aaa\in {\rm bd}(\mathbf{H}_i)\ {\rm with}\ \ccc\in[\bbb ,\x_i]\}$. Moreover, let $\A'_i:={\rm bd}(\mathbf{H}_i)\setminus\A_i$.
\end{definition}

Clearly, for any $\aaa\in\A'_i$, $1\leq i\leq m$ we have $\bbb\in{\rm relint}([\ccc,\x_i])$, where ${\rm relint}(\cdot)$ denotes the {\it relative interior} of the corresponding set in $\Ss^d$. Moreover, we note that $\A_i$ as well as $\A'_i$ are starshaped sets with respect to $\x_i$ in $\Ss^d$, where $1\leq i\leq m$. Thus, it follows in a rather straightforward way that
$$
\frac{SV_{d}(\B_{\Ss^d}[\x_i,\epsilon]\cap\mathbf{V}_i)}{SV_d(\mathbf{V}_i)}=\frac{SV_{d}(\A_i\cap\mathbf{V}_i)+SV_{d}(\A'_i\cap\B_{\Ss^d}[\x_i,\epsilon])}{SV_d(\mathbf{V}_i)}
$$
$$
=\frac{SV_{d}(\A_i\cap\mathbf{V}_i)+SV_{d}(\A'_i\cap\mathbf{V}_i)\frac{SV_{d}(\A'_i\cap\B_{\Ss^d}[\x_i,\epsilon])}{SV_{d}(\A'_i\cap\mathbf{V}_i)}}{SV_d(\mathbf{V}_i)}\geq
\left(\frac{SV_{d}(\A_i\cap\mathbf{V}_i)+SV_{d}(\A'_i\cap\mathbf{V}_i)}{SV_d(\mathbf{V}_i)}\right)\frac{SV_{d}(\B_{\Ss^d}[\x_i,\epsilon])}{SV_d(\mathbf{H}_i)}
$$
$$
=\frac{SV_{d}(\B_{\Ss^d}[\x_i,\epsilon])}{SV_d(\mathbf{H}_i)},
$$
finishing the proof of Proposition~\ref{density-lower-bound}.
\end{proof}

Thus, Proposition~\ref{density-lower-bound} yields that
$${SV}_d(\mathbf{B}_{\Ss^d}[\mathbf{x}_i, \epsilon]\cap\mathbf{V}_i){SV}_d( \mathbf{H}_i)\ge{SV}_d(\mathbf{B}_{\Ss^d}[\mathbf{x}_i, \epsilon]){SV}_d(\mathbf{V}_i),$$
or equivalently
\begin{equation}\label{spherical-voronoi-decomposition-inequality}
{SV}_d( \mathbf{X}_{\epsilon}\cap\mathbf{V}_i)\frac{{SV}_d( \mathbb{S}^d)}{2}\ge{SV}_d(\mathbf{B}_{\Ss^d}[\mathbf{x}_i, \epsilon]){SV}_d(\mathbf{V}_i)
\end{equation}
holds for all $1\le i\le m$. As $\sum_{i=1}^{m}{SV}_d( \mathbf{X}_{\epsilon}\cap\mathbf{V}_i)={SV}_d(\mathbf{X}_{\epsilon})$ and $\sum_{i=1}^{m}{SV}_d(\mathbf{V}_i)={SV}_d( \mathbb{S}^d)$ therefore (\ref{spherical-voronoi-decomposition-inequality}) implies in a straightforward way that
$${SV}_d(\mathbf{X}_{\epsilon})\ge 2{SV}_d(\mathbf{B}_{\Ss^d}[\mathbf{x}, \epsilon])={SV}_d(\hat{{\mathbf{X}}}_{\epsilon})$$
holds for $\hat{{\mathbf{X}}}=\{\x, -\x\}$ with $\mathbf{x}\in\mathbb{S}^d$. This completes the proof of Lemma~\ref{spherical-neighbourhood-sets}. \end{proof}

Now, we turn to the proof of Theorem~\ref{max-volume-lense}. Without loss of generality we may assume that $P=\{\p_1,\p_2,\dots ,\p_N\}\subset\B^d[\oo, r_0]$ with $r_{cr}(P)=r_0$ implying that there exists a simplex of dimension $l$ ($1\leq l\leq d$) spanned by some points of $P$ say, by $Q:=\{\p_1,\p_2,\dots ,\p_{l+1}\}$ lying on $r_0\Ss^{d-1}={\rm bd}(\B^d[\oo, r_0])$ such that $\oo\in{\rm relint}({\rm conv}(Q))$. As $P^r\subseteq Q^r$
and $r_{cr}(P)=r_{cr}(Q)=r_0$ therefore Theorem~\ref{max-volume-lense} follows from the inequality
\begin{equation}\label{Bezdek-15}
V_d(Q^r)\leq V_d(L_{r, r-r_0, d}),
\end{equation}
where the inscribed ball of $Q^r$ as well as $L_{r, r-r_0, d}$ is $\B^d[\oo, r-r_0]$. Clearly, 
\begin{equation}\label{Bezdek-16}
r_{cr}(L_{r, r-r_0, d})=\sqrt{r^2-r_0^2} \ {\rm and} \ L_{r, r-r_0, d}\subset \B^d\left[\oo, \sqrt{r^2-r_0^2}\right].
\end{equation}
Moreover, Lemma~\ref{max-cr} implies that
\begin{equation}\label{Bezdek-17}
\ Q^r\subset \B^d\left[\oo, \sqrt{r^2-r_0^2}\right]\ \left({\rm and \ therefore}\ r_{cr}(Q^r)\leq\sqrt{r^2-r_0^2}\right).
\end{equation}
Thus, (\ref{Bezdek-16}) and (\ref{Bezdek-17}) yield that
\begin{equation}\label{Bezdek-18} 
V_d(Q^r)=\int_{0}^{\sqrt{r^2-r_0^2}}\sigma(x\Ss^{d-1}\cap Q^r)dx\ {\rm and}\ V_d( L_{r, r-r_0, d})=\int_{0}^{\sqrt{r^2-r_0^2}}\sigma(x\Ss^{d-1}\cap  L_{r, r-r_0, d})dx,
\end{equation} 
where $\sigma$ denotes the proper spherical Lebesgue measure on $x\Ss^{d-1}$. Hence, using (\ref{Bezdek-18}) we get that in order to prove (\ref{Bezdek-15}) it is sufficient to show that
\begin{equation}\label{Bezdek-19}
\sigma(x\Ss^{d-1}\cap Q^r)\leq \sigma(x\Ss^{d-1}\cap  L_{r, r-r_0, d})
\end{equation}
holds for all $x$ with $0\leq x\leq \sqrt{r^2-r_0^2}$. Now, (\ref{Bezdek-19}) holds trivially for all $0\leq x\leq r-r_0=r_{in}(Q^r)=r_{in}(L_{r, r-r_0, d})$. So, we are left with the case when $r-r_0<x\leq \sqrt{r^2-r_0^2}$. Next, notice that according to (\ref{Bezdek-16}) the subset $x\Ss^{d-1}\cap  L_{r, r-r_0, d}$ of $x\Ss^{d-1}$ is the complement of the union of a pair of antipodal (open) spherical caps of angular radius $0<\epsilon\leq\frac{\pi}{2}$. On the other hand, the subset $x\Ss^{d-1}\cap Q^r$ of $x\Ss^{d-1}$ is the complement of the union of $l+1$ (open) spherical caps of angular radius $\epsilon$ centered at the points $-\frac{x}{r_0}\p_i$, $1\leq i\leq l+1$, which do not lie on an open hemisphere of $x\Ss^{d-1}$ because $\oo\in{\rm relint}({\rm conv}(Q))$. Thus, Lemma~\ref{spherical-neighbourhood-sets} implies (\ref{Bezdek-19}) in a straightforward way. This completes the proof of Theorem~\ref{max-volume-lense}.

\section{Proof of Corollary~\ref{Kadets-type}}

Clearly, $\C_i\cap\B$ is an $r$-ball body in $\Ee^d$ for all $1\leq i\leq n$. Thus, Theorem~\ref{max-volume-lense} and $\B\subseteq\bigcup_{i=1}^{n}\C_i$ imply that
\begin{equation}\label{Bezdek-20}
V_d(\B)\leq\sum_{i=1}^nV_d(\C_i\cap\B)\leq\sum_{i=1}^{n}V_d\left(L_{r, r_{in}(\C_i\cap\B), d}\right).
\end{equation}
Finally, we note that in order to have (\ref{Bezdek-20}) one must have $r\leq \sum_{i=1}^n r_{in}(\C_i\cap\B)$, finishing the proof of Corollary~\ref{Kadets-type}.

\section{Proof of Theorem~\ref{upper-bound-for-intrinsic-volume}}

As in the proof of Theorem~\ref{max-volume-lense}, we may assume without loss of generality that $P=\{\p_1,\p_2,\dots ,\p_N\}\subset\B^d[\oo, r_0]$ with $r_{cr}(P)=r_0$. It follows that there exists a simplex $\Delta$ of dimension $l$ ($1\leq l\leq d$) spanned by $l+1$ points of $P$ say, by $Q:=\{\p_1,\p_2,\dots ,\p_{l+1}\}$ lying on $r_0\Ss^{d-1}={\rm bd}(\B^d[\oo, r_0])$ such that $\oo\in{\rm relint}(\Delta)$. Clearly, the circumscribed ball of $\Delta={\rm conv}Q$ is $\B^d[\oo, r_0]$ and 
\begin{equation}\label{Bezdek-21}
r_{cr}(P)=r_{cr}(Q)=r_{0},\ r_{in}(P^r)=r_{in}(Q^r)=r-r_0,\ {\rm and}\ P^r\subseteq Q^r .
\end{equation}

\begin{definition}\label{symmetral}
Let $\emptyset\neq X\subseteq\Ee^d$. Then the {\rm central symmetral} (called also {\rm Minkowski symmetral}) $M_{\oo}(X)$ of $X$ is defined by $M_{\oo}(X):=\frac{1}{2}\left(X+(-X)\right)$.
\end{definition}
For properties of central symmetrization we refer the interested reader to \cite{BGG}. In particular, recall that the Brunn-Minkowski inequality for intrinsic volumes (\cite{Ga}) and (\ref{Bezdek-21}) yield
\begin{equation}\label{Bezdek-22}
V_k(P^r)\leq V_k\left(M_{\oo}(P^r)\right)\leq V_k\left(M_{\oo}(Q^r)\right),
\end{equation}
where $0<k\leq d$.
\begin{lemma}\label{symmetral-of-ball-polyhedron}
\begin{equation}\label{Bezdek-23}
M_{\oo}(Q^r)=\left(M_{\oo}(Q)\right)^r.
\end{equation}
\end{lemma}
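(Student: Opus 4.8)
The plan is to prove the set identity (\ref{Bezdek-23}) by double inclusion, noting that the right-hand side $\left(M_{\oo}(Q)\right)^r$ is the $r$-ball polyhedron generated by the finite point set $M_{\oo}(Q)=\frac{1}{2}\left(Q+(-Q)\right)=\{\frac12(\p_i-\p_j)\st 1\le i,j\le l+1\}$, while the left-hand side is the central symmetral $\frac12\left(Q^r-Q^r\right)$ of the $r$-ball body $Q^r$. Both are compact convex sets, so it suffices to establish containment each way.

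First I would settle the inclusion $M_{\oo}(Q^r)\subseteq\left(M_{\oo}(Q)\right)^r$, which is the routine direction. A point of $M_{\oo}(Q^r)$ has the form $\z=\frac12(\q-\q')$ with $\q,\q'\in Q^r$, so $|\q-\p_i|\le r$ and $|\q'-\p_j|\le r$ for all $i,j$. The triangle inequality then gives
\[
\left|\z-\tfrac12(\p_i-\p_j)\right|=\tfrac12\left|(\q-\p_i)-(\q'-\p_j)\right|\le\tfrac12\left(|\q-\p_i|+|\q'-\p_j|\right)\le r
\]
for every pair $i,j$, i.e. $\z\in\B^d[\frac12(\p_i-\p_j),r]$ for all $i,j$, which is exactly $\z\in\left(M_{\oo}(Q)\right)^r$.

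The substance of the lemma is the reverse inclusion $\left(M_{\oo}(Q)\right)^r\subseteq M_{\oo}(Q^r)$. Given $\z\in\left(M_{\oo}(Q)\right)^r$, put $\vect{w}:=2\z$; membership means $|\vect{w}-(\p_i-\p_j)|\le 2r$ for all $i,j$. Since $\frac12(Q^r-Q^r)=\{\z\st Q^r\cap(Q^r+2\z)\neq\emptyset\}$ (a common point $\q\in Q^r\cap(Q^r+\vect{w})$ yields $\q,\q-\vect{w}\in Q^r$ and $\z=\frac12(\q-(\q-\vect{w}))$), it is enough to prove that the two translates $Q^r$ and $Q^r+\vect{w}$ meet. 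Now $Q^r\cap(Q^r+\vect{w})=\bigcap_{i}\B^d[\p_i,r]\cap\bigcap_{j}\B^d[\p_j+\vect{w},r]$ is an intersection of $2(l+1)$ balls of radius $r$, and such an intersection is nonempty precisely when the circumradius of the center set $\{\p_1,\dots,\p_{l+1}\}\cup\{\p_1+\vect{w},\dots,\p_{l+1}+\vect{w}\}$ does not exceed $r$ (there is a point within distance $r$ of every center iff some ball of radius $r$ contains all centers). Thus the hard inclusion is equivalent to the circumradius bound $r_{cr}\!\left(Q\cup(Q+\vect{w})\right)\le r$.

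I expect this circumradius estimate to be the main obstacle, because it cannot follow from pairwise distances alone: the hypothesis only gives $|\p_i-(\p_j+\vect{w})|\le 2r$ between the two clusters and $|\p_i-\p_{i'}|\le 2r_0<2r$ within each, and pairwise bounds of size $2r$ do not in general force circumradius $\le r$ in dimension $d>1$. The plan is to exploit the full two-cluster structure together with the normalization $r_{cr}(Q)=r_0<r$ and $\oo\in{\rm relint}({\rm conv}(Q))$: reduce by Helly's theorem to subfamilies of at most $d+1$ of the $2(l+1)$ balls, and for each such mixed subfamily bound the circumradius of its centers using the mixed-distance inequality $|\p_i-(\p_j+\vect{w})|\le 2r$ and the fact that $\oo$ lies in the relative interior of ${\rm conv}(Q)$. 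Equivalently, one may recast the goal through support functions: writing $w_{Q^r}(\vect{u})=h_{Q^r}(\vect{u})+h_{Q^r}(-\vect{u})$ for the width of $Q^r$, a separation argument shows that the translates meet as soon as $\left\langle\vect{w},\vect{u}\right\rangle\le w_{Q^r}(\vect{u})$ for every unit vector $\vect{u}$, and this is the inequality to be extracted from $\vect{w}\in\bigcap_{i,j}\B^d[\p_i-\p_j,2r]$. Verifying it for all directions $\vect{u}$ — equivalently, matching the width of the $r$-ball body $Q^r$ against the support function of the symmetrized generator set — is the crux of the argument.
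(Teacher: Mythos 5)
Your first inclusion $M_{\oo}(Q^r)\subseteq\left(M_{\oo}(Q)\right)^r$ is complete and coincides with the paper's argument. The problem is the reverse inclusion: you correctly reduce it to showing $Q^r\cap(Q^r+\vect{w})\neq\emptyset$ whenever $\|\vect{w}-(\p_i-\p_j)\|\le 2r$ for all $i,j$, equivalently to the circumradius bound $r_{cr}\left(Q\cup(Q+\vect{w})\right)\le r$, and then you stop, flagging this as ``the crux.'' As written, the proposal therefore does not prove the stated equality. For comparison, the paper handles this direction by asserting that $\z'\in\frac{1}{2}\left(\B^d[\p_i,r]+(-\B^d[\p_j,r])\right)$ for every pair $(i,j)$ ``therefore'' gives $\z'\in\frac{1}{2}\left(Q^r+(-Q^r)\right)$; this is exactly the interchange of intersection and Minkowski sum that your reformulation isolates, and it receives no justification there either.

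Moreover, your suspicion that the pairwise distance bounds cannot force the circumradius bound is correct, and in fact the reverse inclusion fails in general, so no completion of this step is possible. Take $d=2$, $r=1$, and let $Q=\{\p_1,\p_2,\p_3\}$ be an equilateral triangle of side $1$ centered at $\oo$ (so $r_0=1/\sqrt{3}<r$ and $\oo\in{\rm relint}({\rm conv}Q)$, as in the paper's setting). Then $Q^r$ is the Reuleaux triangle of constant width $1$, hence $M_{\oo}(Q^r)=\frac{1}{2}\left(Q^r+(-Q^r)\right)=\B^2\left[\oo,\frac{1}{2}\right]$. On the other hand, $M_{\oo}(Q)$ consists of $\oo$ together with the six points $\pm\frac{1}{2}(\p_i-\p_j)$, $i\neq j$, which form a regular hexagon of circumradius $\frac{1}{2}$, and a direct computation shows that $\left(M_{\oo}(Q)\right)^1$ contains points at distance $\frac{\sqrt{15}-\sqrt{3}}{4}\approx 0.535>\frac{1}{2}$ from $\oo$ in the directions bisecting adjacent hexagon vertices. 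So only the inclusion $M_{\oo}(Q^r)\subseteq\left(M_{\oo}(Q)\right)^r$ holds. That inclusion is all that is actually used downstream: Corollary~\ref{lense-assigned-to-symmetral} only needs $M_{\oo}(Q^r)$ to be contained in the $r$-lense $\B^d\left[\frac{1}{2}(\p_i-\p_j),r\right]\cap\B^d\left[\frac{1}{2}(\p_j-\p_i),r\right]$ for a diametral pair $i,j$, which follows from your first (correct) inclusion. The right course is therefore to prove and use only that inclusion rather than to keep attacking the equality.
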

\begin{proof} Clearly, (\ref{Bezdek-23}) is equivalent to 
\begin{equation}\label{Bezdek-24}
M_{\oo}(Q^r)=\bigcap\left\{\B^d\left[\frac{1}{2}(\p_i-\p_j), r\right]\bigg|1\leq i,j\leq l+1\right\},
\end{equation}
which we prove as follows. Let $\z\in M_{\oo}(Q^r)=\frac{1}{2}\left(Q^r+(-Q^r)\right)$. Then there exist $\x,\y\in Q^r$ such that $\z=\frac{1}{2}(\x-\y)$. It follows that $\x\in \B^d[\p_i,r]$ and $\y\in \B^d[\p_j,r]$ for all $1\leq i,j\leq l+1$ and therefore
\begin{equation}\label{Bezdek-25}
\z=\frac{1}{2}(\x-\y)\in \frac{1}{2}\B^d[\p_i,r]+\frac{1}{2}\B^d[-\p_j,r]=\B^d\left[\frac{1}{2}(\p_i-\p_j),r\right]  
\end{equation}
holds for all  $1\leq i,j\leq l+1$. Clearly, (\ref{Bezdek-25}) yields $M_{\oo}(Q^r)\subseteq \bigcap\left\{\B^d\left[\frac{1}{2}(\p_i-\p_j), r\right]\bigg|1\leq i,j\leq l+1\right\}$. On the other hand, let $\z'\in \bigcap\left\{\B^d\left[\frac{1}{2}(\p_i-\p_j), r\right]\bigg|1\leq i,j\leq l+1\right\}$. Then $\z'\in \B^d\left[\frac{1}{2}(\p_i-\p_j),r\right]= \frac{1}{2}\left(\B^d[\p_i,r]+(-\B^d[\p_j,r])\right)$ holds for all $1\leq i,j\leq l+1$ and therefore $\z'\in \frac{1}{2}\left(Q^r+(-Q^r)\right)$ implying that $ \bigcap\left\{\B^d\left[\frac{1}{2}(\p_i-\p_j), r\right]\bigg|1\leq i,j\leq l+1\right\}\subseteq M_{\oo}(Q^r)$. This completes the proof of Lemma~\ref{symmetral-of-ball-polyhedron}.
\end{proof}

\begin{corollary}\label{lense-assigned-to-symmetral}
Lemma~\ref{symmetral-of-ball-polyhedron} implies that $M_{\oo}(Q^r)$ is an $\oo$-symmetric $r$-ball polyhedron and therefore
it is contained in an $r$-lense of inradius equal to
\begin{equation}\label{Bezdek-26}
r_{in}\left[M_{\oo}(Q^r)\right]=r_{in}\left[\left(M_{\oo}(Q)\right)^r\right]=r_{in}^{\oo}\left[\left(M_{\oo}(Q)\right)^r\right]=r-r_{cr}^{\oo}\left[M_{\oo}(Q)\right]. 
\end{equation}
Hence, 
\begin{equation}\label{Bezdek-27}
V_k\left(M_{\oo}(Q^r)\right)\leq V_k\left(L_{r, r-r_{cr}^{\oo}\left[M_{\oo}(Q)\right], d}\right)
\end{equation}
holds for all $0<k\leq d$.
\end{corollary}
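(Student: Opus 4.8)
The plan is to read (\ref{Bezdek-26}) as a chain of four equal quantities, justify each link in turn, and then deduce (\ref{Bezdek-27}) by exhibiting one explicit $r$-lense that contains $M_{\oo}(Q^r)$ and invoking monotonicity of intrinsic volumes under inclusion of convex bodies.

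First I would record the structural facts. By Lemma~\ref{symmetral-of-ball-polyhedron} we have $M_{\oo}(Q^r)=(M_{\oo}(Q))^r$, which is the first equality; since $M_{\oo}(Q)=\frac12(Q+(-Q))$ is a finite $\oo$-symmetric set, the set $C:=(M_{\oo}(Q))^r$ is an $\oo$-symmetric $r$-ball polyhedron, hence a convex body. For the second equality $r_{in}(C)=r_{in}^{\oo}(C)$ I would use $\oo$-symmetry together with convexity: if $\B^d[\p,\rho]\subseteq C$, then $\B^d[-\p,\rho]\subseteq C$ as well, and $\frac12\left(\B^d[\p,\rho]+\B^d[-\p,\rho]\right)=\B^d[\oo,\rho]\subseteq\frac12(C+C)=C$, so a largest inscribed ball may be taken centered at $\oo$.

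The third equality is where the actual content sits, but it reduces to a one-line duality computation. For $\rho>0$ and any $\x$, the inclusion $\B^d[\oo,\rho]\subseteq\B^d[\x,r]$ holds exactly when $\|\x\|+\rho\le r$, since the point of $\B^d[\oo,\rho]$ farthest from $\x$ lies at distance $\|\x\|+\rho$. Intersecting over $\x\in M_{\oo}(Q)$ shows that $\B^d[\oo,\rho]\subseteq C$ iff $\rho\le r-\max_{\x\in M_{\oo}(Q)}\|\x\|=r-r_{cr}^{\oo}[M_{\oo}(Q)]$, which is precisely $r_{in}^{\oo}(C)=r-r_{cr}^{\oo}[M_{\oo}(Q)]$.

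Finally I would build the lense. Setting $\rho:=r-r_{cr}^{\oo}[M_{\oo}(Q)]$ and picking $\x_0\in M_{\oo}(Q)$ with $\|\x_0\|=r_{cr}^{\oo}[M_{\oo}(Q)]=r-\rho$ (available since $M_{\oo}(Q)$ is finite), $\oo$-symmetry gives $-\x_0\in M_{\oo}(Q)$, so $C\subseteq\B^d[\x_0,r]\cap\B^d[-\x_0,r]$. This intersection is an $r$-lense whose inscribed ball is $\B^d[\oo,r-\|\x_0\|]=\B^d[\oo,\rho]$, i.e.\ it equals $L_{r,\rho,d}$; monotonicity of $V_k$ then yields (\ref{Bezdek-27}) for all $0<k\le d$. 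The one point I expect to need care is verifying that the inradius of $\B^d[\x_0,r]\cap\B^d[-\x_0,r]$ is exactly $\rho$ (equivalently, that its largest inscribed ball is the $\oo$-centered one of radius $r-\|\x_0\|$), so that the containing body is genuinely the lense $L_{r,\rho,d}$ of the advertised inradius; the remaining steps are either Lemma~\ref{symmetral-of-ball-polyhedron} or short symmetry/duality arguments.
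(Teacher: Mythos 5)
Your proposal is correct and follows the same route the paper leaves implicit: the paper states the corollary with only the remark that $M_{\oo}(Q^r)=(M_{\oo}(Q))^r$ is an $\oo$-symmetric $r$-ball polyhedron and is therefore contained in an $r$-lense of the stated inradius, and your write-up simply supplies the missing details (the Minkowski-averaging argument showing the inradius is attained at $\oo$, the observation that $\B^d[\oo,\rho]\subseteq\B^d[\x,r]$ iff $\rho\le r-\|\x\|$, and the choice of the antipodal pair $\pm\x_0$ of farthest centers to build the containing lense). All steps check out, including the verification that the constructed lense has inradius exactly $r-r_{cr}^{\oo}\left[M_{\oo}(Q)\right]$.
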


\begin{lemma}\label{lower-bounding-cr}
Let $Q=\{\p_1,\p_2,\dots ,\p_{l+1}\}\subset r_0\Ss^{d-1}={\rm bd}(\B^d[\oo, r_0])$ be given such that ${\rm conv}Q$ is an $l$-dimensional simplex with $\oo\in{\rm relint}({\rm conv}Q)$ in $\Ee^d$, where $1\leq l\leq d$. Then
\begin{equation}\label{Bezdek-28}
\sqrt{\frac{d+1}{2d}}r_0\leq \sqrt{\frac{l+1}{2l}}r_0\leq r_{cr}^{\oo}\left[M_{\oo}(Q)\right]. 
\end{equation}
\end{lemma}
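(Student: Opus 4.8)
The plan is to first unwind the right-hand side. Since $Q=\{\p_1,\dots,\p_{l+1}\}$ is a finite point set, its central symmetral $M_{\oo}(Q)=\frac12(Q+(-Q))$ is again the finite set $\{\tfrac12(\p_i-\p_j)\st 1\le i,j\le l+1\}$, so its circumradius about $\oo$ is simply $r_{cr}^{\oo}[M_{\oo}(Q)]=\tfrac12\max_{i,j}\|\p_i-\p_j\|=\tfrac12\,\mathrm{diam}(Q)$. Using $\|\p_i-\p_j\|^2=2r_0^2-2\langle\p_i,\p_j\rangle$ (all points lie on $r_0\Ss^{d-1}$), the second inequality in (\ref{Bezdek-28}) is equivalent to the claim
\begin{equation*}
\min_{i\neq j}\langle\p_i,\p_j\rangle\le -\tfrac{1}{l}r_0^2 ,
\end{equation*}
because this produces a pair with $\|\p_i-\p_j\|^2\ge 2r_0^2(1+\tfrac1l)=\tfrac{2(l+1)}{l}r_0^2$, i.e. $\mathrm{diam}(Q)\ge r_0\sqrt{2(l+1)/l}$, which rearranges to $\tfrac12\mathrm{diam}(Q)\ge\sqrt{\tfrac{l+1}{2l}}\,r_0$.

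To prove the inner-product claim I would exploit the hypothesis $\oo\in\mathrm{relint}(\mathrm{conv}\,Q)$, which supplies barycentric coordinates $\lambda_1,\dots,\lambda_{l+1}>0$ with $\sum_i\lambda_i=1$ and $\sum_i\lambda_i\p_i=\oo$. Squaring this relation gives
\begin{equation*}
0=\Big\|\sum_i\lambda_i\p_i\Big\|^2=r_0^2\sum_i\lambda_i^2+\sum_{i\neq j}\lambda_i\lambda_j\langle\p_i,\p_j\rangle .
\end{equation*}
Writing $m:=\min_{i\neq j}\langle\p_i,\p_j\rangle$ and $S:=\sum_i\lambda_i^2$, and using $\lambda_i\lambda_j>0$ together with $\sum_{i\neq j}\lambda_i\lambda_j=1-S$, the cross-term is bounded below by $m(1-S)$, so $-r_0^2 S\ge m(1-S)$, that is $m\le -r_0^2\,\tfrac{S}{1-S}$.

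The final step is the estimate $S=\sum_i\lambda_i^2\ge\tfrac{1}{l+1}$ (Cauchy--Schwarz for $l+1$ positive weights summing to $1$), combined with the fact that $S\mapsto\tfrac{S}{1-S}$ is increasing on $(0,1)$; hence $\tfrac{S}{1-S}\ge\tfrac1l$ and $m\le-\tfrac{r_0^2}{l}$, as desired. The first inequality in (\ref{Bezdek-28}) then follows at once because $t\mapsto\tfrac{t+1}{2t}$ is decreasing and $l\le d$.

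I expect the one genuinely delicate point to be the sign bookkeeping in the last paragraph: we bound $m$ from \emph{above} by $-r_0^2\tfrac{S}{1-S}$, so to push this below $-r_0^2/l$ we need a \emph{lower} bound on $\tfrac{S}{1-S}$, i.e. the \emph{smallest} admissible $S$ — which corresponds exactly to the regular-simplex configuration where all $\lambda_i$ are equal and every edge has length $r_0\sqrt{2(l+1)/l}$, confirming the bound is sharp. Everything else (the identity $r_{cr}^{\oo}[M_{\oo}(Q)]=\tfrac12\mathrm{diam}(Q)$ and the expansion of the squared norm) is routine.
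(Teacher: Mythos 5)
Your argument is correct. Both you and the paper begin with the same reduction: $r_{cr}^{\oo}\left[M_{\oo}(Q)\right]=\tfrac12\max_{i,j}\|\p_i-\p_j\|=\tfrac12\,{\rm diam}(Q)$, so everything hinges on the diameter bound ${\rm diam}(Q)\geq r_0\sqrt{2(l+1)/l}$, and the first inequality in (\ref{Bezdek-28}) is in both cases just the monotonicity of $t\mapsto\frac{t+1}{2t}$. Where you diverge is in how that diameter bound is obtained: the paper simply invokes Jung's theorem (citing Dekster) for compact sets of given circumradius in $\Ee^l$, whereas you prove the bound from scratch. Your computation --- writing $\oo=\sum_i\lambda_i\p_i$ with positive barycentric weights, expanding $0=r_0^2S+\sum_{i\neq j}\lambda_i\lambda_j\langle\p_i,\p_j\rangle$, bounding the cross-term below by $m(1-S)$, and combining $S\geq\frac1{l+1}$ with the monotonicity of $S\mapsto\frac{S}{1-S}$ to get $m\leq-\frac{r_0^2}{l}$ --- is exactly the core estimate in the standard proof of Jung's theorem; the hypotheses of the lemma (all points on $r_0\Ss^{d-1}$ with $\oo\in{\rm relint}({\rm conv}\,Q)$) are precisely what that proof's Carath\'eodory/Helly reduction step would otherwise have to supply, so you get to skip the reduction entirely. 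The trade-off is clear: the paper's proof is shorter by citation, while yours is self-contained, elementary, and makes the sharpness (the regular simplex, all $\lambda_i$ equal) visible. All the individual steps check out, including the sign bookkeeping ($1-S>0$ since $l+1\geq 2$ positive weights sum to $1$, and the pair realizing $m$ has $\|\p_i-\p_j\|^2=2r_0^2-2m$), so there is no gap.
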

\begin{proof}
In fact, one may assume that $Q=\{\p_1,\p_2,\dots ,\p_{l+1}\}\subset r_0\Ss^{l-1}\subset\Ee^l$ and ${\rm conv}Q$ is an $l$-dimensional simplex with the origin lying in its interior in $\Ee^l$ (i.e., $\oo\in{\rm int}({\rm conv}Q)$).  
Clearly, $r_{cr}(Q)=r_{cr}^{\oo}(Q)=r_0$ and
\begin{equation}\label{Bezdek-29}
r_{cr}^{\oo}\left[M_{\oo}(Q)\right]=\max\left\{\frac{1}{2}\|\p_i-\p_j\|\bigg| 1\leq i,j\leq l+1\right\}=:\frac{1}{2}{\rm diam}(Q),
\end{equation}
where $M_{\oo}(Q)$ stands for the central symmetral of $Q$ in $\Ee^l\subseteq\Ee^d$ and ${\rm diam}(Q)$ denotes the {\it diameter} of $Q$. Next, recall Jung's theorem
stated as follows (see Theorem 1 in \cite{De}): Let $C\subset\Ee^l$ a compact set having unit circumradius. Then $2\sqrt{\frac{l+1}{2l}}\leq {\rm diam}(C)$. Finally, this theorem of Jung and (\ref{Bezdek-29}) imply in a straightforward way that $\sqrt{\frac{l+1}{2l}}r_0\leq \frac{1}{2}{\rm diam}(Q)=r_{cr}^{\oo}\left[M_{\oo}(Q)\right]$ and so, (\ref{Bezdek-28}) follows. This completes the proof of Lemma~\ref{lower-bounding-cr}.
\end{proof}

Hence, (\ref{Bezdek-22}), (\ref{Bezdek-27}), and (\ref{Bezdek-28}) yield (\ref{Bezdek-2}), finishing the proof of Theorem~\ref{upper-bound-for-intrinsic-volume}.

\section{Proof of Theorem~\ref{min-volume-spindle}}

As in the proof of Theorem~\ref{max-volume-lense}, we may assume without loss of generality that $P=\{\p_1,\p_2,\dots ,\p_N\}\subset\B^d[\oo, r_0]$ with $r_{cr}(P)=r_0$. It follows that there exists a simplex $\Delta$ of dimension $l$ ($1\leq l\leq d$) spanned by $l+1$ points of $P$ say, by $Q:=\{\p_1,\p_2,\dots ,\p_{l+1}\}$ lying on $r_0\Ss^{d-1}={\rm bd}(\B^d[\oo, r_0])$ such that $\oo\in{\rm relint}(\Delta)$. Clearly, the circumscribed ball of ${\rm conv}_r\Delta={\rm conv}_rQ$ is $\B^d[\oo, r_0]$ and so,
\begin{equation}\label{Bezdek-30}
r_{cr}^{\oo}\left({\rm conv}_rQ\right)=r_{0}\ {\rm with}\ {\rm conv}_rQ\subseteq{\rm conv}_rP\ {\rm implying}\ V_d\left({\rm conv}_rQ\right)\leq V_d\left({\rm conv}_rP\right).
\end{equation}
Furthermore, (\ref{Bezdek-10}) and (\ref{Bezdek-11}) imply 
\begin{equation}\label{Bezdek-31}
r_{in}^{\oo}\left(S_{r, r_0, d}\right)=r-\sqrt{r^2-r_0^2}\leq  r_{in}^{\oo}\left({\rm conv}_rQ\right),
\end{equation}
where $S_{r, r_0, d}$ is an $r$-spindle having $r_{cr}^{\oo}\left(S_{r, r_0, d}\right)=r_{0}$ in $\Ee^d$. Thus, it follows that
\begin{equation}\label{Bezdek-32} 
V_d(S_{r, r_0, d})=\int_{0}^{r_0}\sigma(x\Ss^{d-1}\cap  S_{r, r_0, d})dx \ {\rm and}\   V_d({\rm conv}_rQ)=\int_{0}^{r_0}\sigma(x\Ss^{d-1}\cap {\rm conv}_rQ)dx,
\end{equation} 
where $\sigma$ denotes the proper spherical Lebesgue measure on $x\Ss^{d-1}$. Clearly, (\ref{Bezdek-31}) yields that
\begin{equation}\label{Bezdek-33}
\sigma(x\Ss^{d-1}\cap  S_{r, r_0, d})= \sigma(x\Ss^{d-1}\cap {\rm conv}_rQ)
\end{equation}
holds for all $0\leq x\leq r-\sqrt{r^2-r_0^2}$. Finally, let $r-\sqrt{r^2-r_0^2}<x\leq r_0$. On the one hand, notice that the subset $x\Ss^{d-1}\cap  S_{r, r_0, d}$ of $x\Ss^{d-1}$ is the union of a pair of antipodal spherical caps of angular radius $0\leq \epsilon<\frac{\pi}{2}$. On the other hand, the subset $x\Ss^{d-1}\cap {\rm conv}_rQ$ of $x\Ss^{d-1}$ contains the union of $l+1$ spherical caps of angular radius $\epsilon$ centered at the points $\frac{x}{r_0}\p_i$, $1\leq i\leq l+1$, which do not lie on an open hemisphere of $x\Ss^{d-1}$ because $\oo\in{\rm relint}({\rm conv}(Q))$. Hence, Lemma~\ref{spherical-neighbourhood-sets} implies that
\begin{equation}\label{Bezdek-34}
\sigma(x\Ss^{d-1}\cap  S_{r, r_0, d})\leq \sigma(x\Ss^{d-1}\cap {\rm conv}_rQ)
\end{equation}
holds for all $r-\sqrt{r^2-r_0^2}<x\leq r_0$. Thus, (\ref{Bezdek-30}), (\ref{Bezdek-32}), (\ref{Bezdek-33}), and (\ref{Bezdek-34}) yield (\ref{Bezdek-4}) in a straightforward way. This completes the proof of Theorem~\ref{min-volume-spindle}.

\section{Proof of Corollary~\ref{lower-bound-for-intrinsic-volume}}

%Proposition 2.5 of \cite{BeNa} and (\ref{Bezdek-9a}) imply
%\begin{equation}\label{Bezdek-35}
%\B^d[\oo, r]={\rm conv}_rP-\left({\rm conv}_rP\right)^r={\rm conv}_rP-P^r,
%\end{equation}
%from which one obtains 
%\begin{equation}\label{Bezdek-36}
%V_1(\B^d[\oo, r])=V_1({\rm conv}_rP)+V_1(P^r).
%\end{equation}
%Using Theorem~\ref{upper-bound-for-intrinsic-volume} and (\ref{Bezdek-36}) we get that
%\begin{equation}\label{Bezdek-37}
%V_1(\B^d[\oo, r])-V_1\left(L_{r, r-\sqrt{\frac{d+1}{2d}}r_0, d}\right)\leq V_1(\B^d[\oo, r])-V_1(P^r)=V_1({\rm conv}_rP).
%\end{equation}
%Next, notice that (\ref{Bezdek-36}) for ${\rm conv}_rP=S_{r, \sqrt{\frac{d+1}{2d}}r_0, d}$ and $P^r=L_{r, r-\sqrt{\frac{d+1}{2d}}r_0, d}$ yields
%\begin{equation}\label{Bezdek-38}
%V_1\left(S_{r, \sqrt{\frac{d+1}{2d}}r_0, d}\right)=V_1(\B^d[\oo, r])-V_1\left(L_{r, r-\sqrt{\frac{d+1}{2d}}r_0, d}\right).
%\end{equation}
%Finally, (\ref{Bezdek-37}) and (\ref{Bezdek-38}) imply (\ref{Bezdek-5}) in a straightforward way.

%Now, we turn to the proof of (\ref{Bezdek-6}). 
On the one hand, the extended isoperimetric inequality (see for example, (1.1) in \cite{PaPi}) yields
\begin{equation}\label{Bezdek-39}
\left(\frac{V_d({\rm conv}_rP)}{V_d(\B^d[\oo, 1])}\right)^{\frac{1}{d}}\leq \left(\frac{V_k({\rm conv}_rP)}{V_k(\B^d[\oo, 1])}\right)^{\frac{1}{k}},
\end{equation}
where $1\leq k<d$. On the other hand, recall (\cite{Sc}) that
\begin{equation}\label{Bezdek-40}
V_k(\B^d[\oo, 1])=\frac{\binom{d}{k}\omega_d}{\omega_{d-k}}
\end{equation}
holds for all $1\leq k\leq d$. Hence, (\ref{Bezdek-39}), (\ref{Bezdek-40}), and Theorem~\ref{min-volume-spindle}  imply
\begin{equation}\label{Bezdek-41}
V_k({\rm conv}_rP)\geq V_k(\B^d[\oo ,1])\frac{1}{{\omega}_d^{\frac{k}{d}}}\bigg[V_d({\rm conv}_rP)\bigg]^{\frac{k}{d}}\geq \frac{\binom{d}{k}\omega_d^{1-\frac{k}{d}}}{\omega_{d-k}}\left(V_d(S_{r, r_0, d})\right)^{\frac{k}{d}}
\end{equation}
for all $1\leq k<d$, finishing the proof of (\ref{Bezdek-6}).

Now, we turn to the proof of (\ref{Bezdek-5}). Proposition 2.5 of \cite{BeNa} and (\ref{Bezdek-9a}) imply
\begin{equation}\label{Bezdek-35}
\B^2[\oo, r]={\rm conv}_rP-\left({\rm conv}_rP\right)^r={\rm conv}_rP-P^r,
\end{equation}
from which one obtains 
\begin{equation}\label{Bezdek-36}
V_1(\B^2[\oo, r])=V_1({\rm conv}_rP)+V_1(P^r).
\end{equation}
Using Remark~\ref{planar case} and (\ref{Bezdek-36}) we get that
\begin{equation}\label{Bezdek-37}
V_1(\B^2[\oo, r])-V_1\left(L_{r, r-r_0, 2}\right)\leq V_1(\B^2[\oo, r])-V_1(P^r)=V_1({\rm conv}_rP).
\end{equation}
Next, notice that (\ref{Bezdek-36}) for ${\rm conv}_rP=S_{r, r_0, 2}$ and $P^r=L_{r, r-r_0, 2}$ yields
\begin{equation}\label{Bezdek-38}
V_1\left(S_{r, r_0, 2}\right)=V_1(\B^2[\oo, r])-V_1\left(L_{r, r-r_0, 2}\right).
\end{equation}
Finally, (\ref{Bezdek-37}) and (\ref{Bezdek-38}) imply (\ref{Bezdek-5}) in a straightforward way. This completes the proof of Corollary~\ref{lower-bound-for-intrinsic-volume}.

\small

\bigskip

%\normalsize

\noindent K\'aroly Bezdek \\
\small{Department of Mathematics and Statistics, University of Calgary, Canada}\\
\small{Department of Mathematics, University of Pannonia, Veszpr\'em, Hungary\\
\small{E-mail: \texttt{bezdek@math.ucalgary.ca}}

\end{document}